\newcommand{\rar}{\rightarrow}
\newcommand{\lar}{\longrightarrow}
\newtheorem{Theorem}{Theorem}[section]
\newtheorem{Corollary}[Theorem]{Corollary}
\newtheorem{Proposition}[Theorem]{Proposition}
\newtheorem{Remark}[Theorem]{Remark}
\newtheorem{Example}[Theorem]{Example}
\newtheorem{Definition}[Theorem]{Definition}
\newtheorem{Question}[Theorem]{Question}
\def\ann{\mbox{\rm ann}}
\def\Sym{\mbox{\rm Sym}}
\def\height{\mbox{\rm height }}
\def\AA{{\mathbf A}}
\def\BB{{\mathbf B}}
\def\RR{{\mathbf R}}
\def\TT{{\mathbf T}}
\def\ttt{{\mathbf t}}
\def\H{{\mathrm H}}
\def\m{{\mathfrak m}}
\begin{document}

\title[On complete monomial ideals]{\sc
 On  complete monomial ideals
}

\author{Philippe Gimenez}
\thanks{The first author was partially supported by {\it Ministerio de Econom\'{\i}a y Competitividad} (Spain), MTM2010-20279-C02-02.}
\address{Departamento de \'Algebra, An\'alisis Matem\'atico, Geometr\'{\i}a y Topolog\'{\i}a \\
Universidad de Valladolid \\
Facultad de Ciencias, , 47011 Valladolid, Spain}
\email[Corresponding author]{pgimenez@agt.uva.es}

\author{Aron Simis}
\thanks{The second author is partially supported by
a CNPq grant and a CAPES Senior Visiting Fellow Scholarship at the Universidade Federal da Paraiba (Brazil)}
\address{Departamento de Matem\'atica \\
Universidade Federal de Pernambuco \\
50740-560 Recife, PE, Brazil }
\email{aron@dmat.ufpe.br}

\author{Wolmer V. Vasconcelos}
\address{Department of Mathematics \\ Rutgers University \\
110 Frelinghuysen Rd, Piscataway, NJ 08854-8019, U.S.A.}
\email{vasconce@math.rutgers.edu}

\author{Rafael H. Villarreal}
\address{Departamento de
Matem\'aticas\\
Centro de Investigaci\'on y de Estudios
Avanzados del
IPN\\
Apartado Postal
14--740,
07000 Mexico City, D.F.}
\email{vila@math.cinvestav.mx}

\subjclass[2010]{Primary 13B22; Secondary 13A30, 13H10.}

\keywords{$\m$-full ideal, normal ideal, Newton polytope, Hilbert function,
     Rees algebra,  Jacobian dual, Cohen--Macaulay ring.}

\begin{abstract}
\noindent
In dimension two,
we  study complete monomial ideals  combinatorially,
their Rees algebras and develop  effective means to
find their defining equations.
\end{abstract}

\maketitle

\section{Introduction}
\noindent
The study of complete ideals in the polynomial ring $k[x,y]$ is a classical subject started by
Zariski in \cite{Z} (see also \cite[Appendix 5]{ZS}) and subsequently developed by various other authors  (\cite{Hu}, \cite{Mon}, \cite{L},  \cite{Rees}).

The special case of monomial ideals is enhanced by the use of combinatorics, specially those parts
related to convex hull techniques.
It is somewhat surprising that only more recently, this facet took off accordingly.
Thus, in  \cite{crispin-quinonez-thesis} and \cite{crispin-quinonez} Qui\~nonez studied the normality of
monomials ideals in $k[x,y]$ and established a criterion in terms of certain partial blocks and
associated sequences of rational numbers.

In the present work, the overall goal is to study normal monomial ideals in $k[x,y]$
 in a  landscape governed by Zariski's theory of complete ideals and the structures and
algorithms associated to Newton polygons.
A common root between  Qui\~nonez' approach and ours is the emphasis on the exponents of the
monomials that generate the given ideal written in lexicographic order with $x>y$, thus affording
a slightly different angle from the one in some of the previous classical approach.

A difference between our results and Qui\~nonez' lies in that we state necessary {\em or} sufficient conditions for
normality directly in terms of the stair sequences of the monomial exponents by means of
certain inequalities.
Since each of these criteria is stated by means of a finite set of numerical inequalities, it is doubtful whether
one can group them together in order to obtain a full characterization of normality
(this point is addressed in detail in Question~\ref{finiteness}).

Other points of contrast are our use of polyhedra theory (such as Pick's formula) and a strengthening of
the relationship between normality and $\m$-fullness -- the latter a concept introduced by Rees and
developed in \cite{HuSw} and \cite{JWat}.
Thus, the preponderance of our algebraic results are derived from the
properties of the polygon defined by the points in the plane whose coordinates
are the exponents of the monomials generating the ideal.
It benefits from the fact that a natural starting point is the direct
description of $\m$-full ideals and the
simplicity of their syzygies.

Unavoidably in such a narrowly defined class of ideals, coming from a slightly distinct view point we recover some
of the results of Qui\~nonez. In such cases, we explain the relationship between the two.

\medskip

Let $\RR = k[x,y]$, $\m = (x,y)$, and $I$ be a monomial ideal. When
needed in our references to the literature,
 we assume $k$ infinite.
 Suppose that $I$ is $\m$-primary, minimally generated by $n$ elements,
  $\mu(I) = n$, but
$I \neq \m^{n-1}$. $I$ is minimally generated by $n$ monomials that are listed lexicographically,
$I=(x^{a_1}, x^{a_2}y^{b_{n-1}},\ldots,x^{a_i}y^{b_{n-i+1}}, \ldots,
x^{a_{n-1}}y^{b_2}, y^{b_1})$
with
\[
a_1>a_2> \cdots>a_{n-1}>a_n:=0, \quad
b_1>b_2> \cdots>b_{n-1}>b_n:=0,
\]
defining the set of points $P_i=(a_i, b_{n-i+1})$, $1\leq i\leq n$.

\medskip

Our first result describes how given a monomial ideal $I$ to find the
smallest $\m$-full monomial ideal $I'$ containing it (Proposition~\ref{mfullclosmonid}).
This works for any $\m$-primary monomial ideals in $k[x_1,\ldots,x_d]$.
Moreover, in the case where $d=2$ we characterize when $I$ is $\m$-full (Theorem~\ref{p-w-v}).
In a different direction we take up the normality question, by
conveying several necessary conditions or sufficient conditions for it to hold, expressed by
systems of linear inequalities $Q(P_1, \ldots, P_n)\leq 0$
(Proposition~\ref{apr14-13}, Theorem~\ref{dim2eff} and
Proposition~\ref{dim2eff2}).

\medskip
Our most comprehensive results are given in the equations of the Rees algebras  $\RR[It]$ of normal monomial
ideals. They are put together from two facts. On one hand, the
 algebras $\RR[It]$ being normal are Cohen-Macaulay by a theorem of Lipman--Teissier
  (\cite[Corollary 5.4]{LT}). On the other hand, the syzygies of $I$ are straightforward enough to permit getting the equations of $\RR[It]$ in one or two rounds of elimination. An
 effective  application
 of  a theorem of
  Morey--Ulrich (\cite[Theorem~1.2]{MU}) gives the case when one round
  of elimination suffices (Theorem~\ref{Expeq}).
  This is an approach that has also been exploited in \cite[Theorem 3.17]{Conca05} and \cite{Conca07} allied with
  a detailed examination of their Hilbert functions
  for a wider class of ideals.
 Here    aiming for less generality
  we get to the equations as  quickly and effectively as possible by introducing
a second elimination round to recover them all
(Theorem~\ref{Expeq2}).
Finally we recall  that while the Rees algebras of $\m$-full ideals are not always Cohen-Macaulay, it
will be so if its special fiber is Cohen-Macaulay (Theorem~\ref{CMviafiber}).

\section{
Criteria for $\m$-fullness and normality}

\subsection{Polyhedra}

Let $\RR=k[x_1,\ldots,x_d]$ be a polynomial ring over a field $k$,
with $d\geq 2$, and let $I$ be a zero-dimensional  ideal
of $\RR$ minimally generated by monomials $x^{v_1},\ldots,x^{v_q}$,
where $x^{v_j}:=x_1^{v_{1,j}}\cdots x_n^{v_{n,j}}$, for $j=1,\ldots,q$.
Consider the
rational convex polyhedron $\mathcal{Q}:=\mathbb{R}_{\geq 0}^d+{\rm
conv}(v_1,\ldots,v_q)$, where ${\rm conv}(v_1,\ldots,v_q)$ denotes
the convex hull of $v_1,\ldots,v_q$ in $\mathbb{R}^d$. The integral polytope
${\rm conv}(v_1,\ldots,v_q)$, denoted by $N(I)$, is called the {\it
Newton polytope\/} of $I$ and $\mathcal{Q}$ is called the {\it Newton
polyhedron\/} of $I$.

\begin{Remark}\rm $\mathcal{Q}\cap\mathbb{Z}^d=
(\mathbb{Q}_{\geq 0}^d+{\rm
conv}_\mathbb{Q}(v_1,\ldots,v_q))\cap \mathbb{Z}^d$. This follows
using that $\mathcal{Q}$ is a rational polyhedron, i.e., the vertices
of $\mathcal{Q}$ are in $\mathbb{Q}^d$.
\end{Remark}

As usual, we denote the {\it floor\/} and {\it ceiling\/} of a real
number  $r$ by $\lfloor{r}\rfloor$ and $\lceil{r}\rceil$,
respectively.  One can use these notions to give necessary and
sufficient conditions for the normality of $I$ as well as some
descriptions of the integral closures of the
powers of $I$ \cite{ainv,multical, normali, clutters} (see also
Proposition~\ref{may29-05} and Theorem~\ref{dim2eff} below).

\begin{Definition}\rm Let $A$ be the $d\times q$ integer matrix with
column
vectors $v_1,\ldots,v_q$. The system $x\geq 0; xA\geq\mathbf{1}$ of linear
inequalities is said to have the {\it integer rounding property\/}
if
$${\rm max}\{\langle y,{\mathbf 1}\rangle \vert\,
y\geq 0; Ay\leq w; y\in\mathbb{N}^q\}
=\lfloor{\rm max}\{\langle y,{\mathbf 1}\rangle \vert\, y\geq 0;
Ay\leq w\}\rfloor
$$
for each integer vector $w$ for which the right
hand side is finite. Here $\mathbf{1}=(1,\ldots,1)$ and
$\langle,\rangle$ denotes the usual inner product.
\end{Definition}

Systems with the integer rounding property have been widely studied
from the viewpoint of integer programming; see \cite[pp. 336--338]{Schr},
\cite[pp. 82--83]{Schr2}, and the references there.

\begin{Proposition}\label{may29-05}
\begin{enumerate}
\item
{\rm\cite[Proposition~1.1]{multical}}
$\overline{I^m}=(\{x^a\vert\,
a\in{m\mathcal{Q}}\cap\mathbb{Z}^d\})$ for $0\neq m\in\mathbb{N}$.
\item\label{ii}
{\rm \cite[Proposition~1.2]{multical}} $\overline{I}$ is generated
by all $x^a$ with $a\in(\mathcal{Q}+[0,1)^d)\cap\mathbb{N}^d$.
\item
{\rm \cite[Corollary 2.5]{poset}} $I$ is normal if and only if
the system $x\geq 0;\,xA\geq\mathbf{1}$ has the integer rounding
property.
\end{enumerate}
\end{Proposition}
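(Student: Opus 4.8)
The plan is to derive all three parts from one combinatorial principle about monomial ideals, so let me fix notation for a general zero-dimensional monomial ideal $J$ minimally generated by $x^{w_1},\ldots,x^{w_s}$ with Newton polyhedron $\mathcal{Q}_J=\mathbb{R}_{\geq 0}^d+{\rm conv}(w_1,\ldots,w_s)$. First I would record that $\overline{J}$ is again a monomial ideal: averaging any integral dependence relation over the torus action $x_i\mapsto t_ix_i$ shows that each monomial component of an element of $\overline{J}$ already lies in $\overline{J}$. Thus it suffices to decide which monomials $x^a$ belong to $\overline{J}$, and everything reduces to the single \emph{core lemma}: $x^a\in\overline{J}$ if and only if $a\in\mathcal{Q}_J$.

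For the core lemma I would argue both directions elementarily. For necessity, suppose $x^a$ satisfies $(x^a)^r+c_1(x^a)^{r-1}+\cdots+c_r=0$ with $c_i\in J^i$, and suppose toward a contradiction that $a\notin\mathcal{Q}_J$; by the supporting-hyperplane description of $\mathcal{Q}_J$ there is $w\geq 0$ with $\langle w,a\rangle<\min_j\langle w,w_j\rangle=:\delta$. Assigning each monomial the $w$-value $\langle w,\cdot\rangle$, the term $(x^a)^r$ has value $r\langle w,a\rangle$, while every $c_i(x^a)^{r-i}$ has value at least $i\delta+(r-i)\langle w,a\rangle>r\langle w,a\rangle$, so the leading term cannot cancel, a contradiction. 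For sufficiency, if $a\in\mathcal{Q}_J\cap\mathbb{Z}^d$, clearing denominators in a representation $a=\sum_j\lambda_jw_j+c$ with $\lambda\geq 0$, $\sum_j\lambda_j=1$, $c\geq 0$ produces an integer $r$ with $ra=\sum_j(r\lambda_j)w_j+rc$, where $r\lambda\in\mathbb{N}^s$ sums to $r$ and $rc\in\mathbb{N}^d$; hence $x^{ra}\in J^r$, so $x^a$ is a root of $T^r-x^{ra}$ and lies in $\overline{J}$. Applying this to $J=I^m$, whose Newton polyhedron is $m\mathcal{Q}$ (the convex hull of the $m$-fold sums of the $v_j$ equals the dilation of ${\rm conv}(v_1,\ldots,v_q)$), yields part (1). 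Part (2) is then the case $m=1$, together with the observation that $\mathcal{Q}+[0,1)^d=\mathcal{Q}$ since $\mathcal{Q}$ is stable under translation by $\mathbb{R}_{\geq 0}^d\supseteq[0,1)^d$, so the two generating sets describe the same ideal.

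For part (3) the plan is to phrase the two containments as a pair of linear programs. Membership $x^a\in I^m$ holds exactly when some $y\in\mathbb{N}^q$ satisfies $Ay\leq a$ and $\langle y,\mathbf{1}\rangle\geq m$ (using $I^{m'}\subseteq I^m$ for $m'\geq m$), so $x^a\in I^m$ if and only if $\max\{\langle y,\mathbf{1}\rangle\colon y\geq 0,\ Ay\leq a,\ y\in\mathbb{N}^q\}\geq m$; by part (1), $x^a\in\overline{I^m}$ if and only if $a\in m\mathcal{Q}$, which (scaling a feasible point of the relaxation) holds precisely when $\max\{\langle y,\mathbf{1}\rangle\colon y\geq 0,\ Ay\leq a\}\geq m$ over real $y$. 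Writing $Z(a)$ and $R(a)$ for the integer and real optima, one always has $Z(a)\leq R(a)$ with $Z(a)$ an integer, and the containment $\overline{I^m}\subseteq I^m$ for all $m$ amounts to: $R(a)\geq m$ forces $Z(a)\geq m$ for every integer $m$, i.e.\ $Z(a)=\lfloor R(a)\rfloor$. Since the feasible set is empty unless $a\in\mathbb{N}^d$ and bounded there (the recession cone $\{y\geq 0\colon Ay=0\}$ is trivial because $I$ is $\m$-primary), these are exactly the vectors $w=a$ for which the right-hand side is finite, so normality is equivalent to the integer rounding property.

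I expect the main obstacle to be the converse direction of part (3): one must verify not merely that the rounding property implies normality but the exact logical equivalence, which hinges on translating the family of equalities $\overline{I^m}=I^m$ (over all $m$ and all $a$) into the single floor identity $Z(a)=\lfloor R(a)\rfloor$ and on confirming that the $w$ with finite, attained optima are precisely $\mathbb{N}^d$. The separating-hyperplane step in the core lemma and the identification $\mathcal{Q}_{I^m}=m\mathcal{Q}$ are the other points requiring care, but they are standard convexity bookkeeping once the Newton polyhedron is in place.
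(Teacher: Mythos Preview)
Your argument is correct. The paper itself supplies no proof of this proposition: each of the three parts is quoted verbatim from \cite{multical} or \cite{poset} and used as a black box, so there is no in-paper argument to compare against. What you have written is essentially the standard route underlying those references---the valuation/supporting-hyperplane direction and the denominator-clearing direction of the core lemma, the Minkowski-sum identification $\mathcal{Q}_{I^m}=m\mathcal{Q}$ for part~(1), and the LP reformulation of $I^m$ versus $\overline{I^m}$ for part~(3).

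One point worth making explicit: in the sufficiency half of your core lemma you ``clear denominators'' in a representation $a=\sum_j\lambda_jw_j+c$, which tacitly assumes the $\lambda_j$ and $c$ may be chosen rational. That is true because $\mathcal{Q}_J$ is a rational polyhedron and $a$ is a lattice point---exactly the content of the Remark immediately preceding the proposition---but it deserves a sentence. Your observation that $\mathcal{Q}+[0,1)^d=\mathcal{Q}$, so that part~(2) is literally the case $m=1$ of part~(1), is correct as the statement is formulated here.
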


\begin{Example}\rm If $I=(x^2,y^3)$, then
$N(I)\cap\mathbb{Z}^2=\{(2,0),(0,3)\}$ and $\overline{I}=I+(xy^2)$.
\end{Example}

If $I\subset k[x,y]$, the next result will be used to give necessary condition for $I$ to be
normal (see Proposition~\ref{apr14-13}(b)).

\begin{Proposition}{\rm(Pick's Formula, \cite[p.~248]{BG-book})}\label{pick-formula} If
$\mathcal{P}\subset\mathbb{R}^2$ is an integral polytope of dimension $2$, then
$$
{\rm area}(\mathcal{P})=|\mathbb{Z}^2\cap \mathcal{P}|-
\frac{|\mathbb{Z}^2\cap\partial \mathcal{P}|}{2}-1=|\mathbb{Z}^2\cap\mathcal{P}^{\rm
o}|+\frac{|\mathbb{Z}^2\cap\partial \mathcal{P}|}{2}-1\index{Pick's
Formula},
$$
where $\partial \mathcal{P}$ and $\mathcal{P}^{\rm o}$ are the boundary and
the interior of $\mathcal{P}$, respectively.
\end{Proposition}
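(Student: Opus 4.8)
The plan is to prove the second displayed equality
$$\text{area}(\mathcal{P}) = |\mathbb{Z}^2 \cap \mathcal{P}^{\rm o}| + \tfrac{1}{2}\,|\mathbb{Z}^2 \cap \partial\mathcal{P}| - 1,$$
since the first follows from it by pure bookkeeping: every lattice point of $\mathcal{P}$ lies either in the interior or on the boundary, so $|\mathbb{Z}^2\cap\mathcal{P}| = |\mathbb{Z}^2\cap\mathcal{P}^{\rm o}| + |\mathbb{Z}^2\cap\partial\mathcal{P}|$, and substituting this identity converts one displayed form into the other. Writing $f(\mathcal{Q}) := |\mathbb{Z}^2 \cap \mathcal{Q}^{\rm o}| + \tfrac{1}{2}\,|\mathbb{Z}^2 \cap \partial\mathcal{Q}| - 1$ for any integral polygon $\mathcal{Q}$, the goal becomes $f(\mathcal{P}) = \text{area}(\mathcal{P})$.

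First I would show that both $f$ and $\text{area}$ are additive under gluing along a common edge. Suppose $\mathcal{Q} = \mathcal{Q}_1 \cup \mathcal{Q}_2$, where the integral polygons $\mathcal{Q}_1,\mathcal{Q}_2$ meet exactly along a shared edge $e$ that is an edge of each. Additivity of area is clear. For $f$, let $m$ be the number of lattice points in the relative interior of $e$. Passing from the two pieces to $\mathcal{Q}$, those $m$ points become interior to $\mathcal{Q}$ while the two endpoints of $e$ stay on $\partial\mathcal{Q}$, so a direct count gives $|\mathbb{Z}^2 \cap \mathcal{Q}^{\rm o}| = |\mathbb{Z}^2 \cap \mathcal{Q}_1^{\rm o}| + |\mathbb{Z}^2 \cap \mathcal{Q}_2^{\rm o}| + m$ and $|\mathbb{Z}^2 \cap \partial\mathcal{Q}| = |\mathbb{Z}^2 \cap \partial\mathcal{Q}_1| + |\mathbb{Z}^2 \cap \partial\mathcal{Q}_2| - 2m - 2$. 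Inserting these into the definition of $f$, the $m$-terms cancel and the factor $-2$ (halved to $-1$) combines with the outer $-1$ to produce exactly the two $-1$'s needed, yielding $f(\mathcal{Q}) = f(\mathcal{Q}_1) + f(\mathcal{Q}_2)$.

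Next I would triangulate. Every integral polygon admits a triangulation into primitive (unimodular) lattice triangles, i.e. triangles whose only lattice points are their three vertices; for instance, take a maximal triangulation whose vertex set is all of $\mathbb{Z}^2 \cap \mathcal{P}$. Applying additivity repeatedly across the interior edges of such a triangulation, both $f$ and $\text{area}$ equal the sum of the corresponding quantities over the primitive triangles, so it suffices to verify $f(T) = \text{area}(T)$ for a single primitive triangle $T$. For such a $T$ one has $|\mathbb{Z}^2 \cap T^{\rm o}| = 0$ and $|\mathbb{Z}^2 \cap \partial T| = 3$, hence $f(T) = 0 + \tfrac{3}{2} - 1 = \tfrac{1}{2}$, and it remains only to show $\text{area}(T) = \tfrac{1}{2}$.

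The main obstacle is precisely this base case. Fixing a vertex of $T$ and letting $u,v \in \mathbb{Z}^2$ be the edge vectors emanating from it, one has $\text{area}(T) = \tfrac{1}{2}\,|\det(u,v)|$, so the claim reduces to showing that primitivity forces $\{u,v\}$ to be a $\mathbb{Z}$-basis of $\mathbb{Z}^2$, equivalently $|\det(u,v)| = 1$. This is where the lattice geometry must be done carefully: if $|\det(u,v)| = N > 1$, then the half-open parallelogram $\{su + tv : 0 \le s,t < 1\}$ is a fundamental domain for the sublattice $\mathbb{Z}u + \mathbb{Z}v$ and has area $N$, so it contains $N$ distinct coset representatives of $\mathbb{Z}^2/(\mathbb{Z}u + \mathbb{Z}v)$; from a nonzero such representative one extracts a lattice point lying inside $T$ or on one of its edges but distinct from the three vertices, contradicting primitivity. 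Granting $|\det(u,v)| = 1$, we get $\text{area}(T) = \tfrac{1}{2} = f(T)$, and summing over the triangulation gives $f(\mathcal{P}) = \text{area}(\mathcal{P})$, which completes the proof.
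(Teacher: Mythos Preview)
Your argument is correct and follows the classical ``additivity plus primitive triangle'' route to Pick's formula. The only spot that could be stated more explicitly is the very last extraction step: given a nonzero lattice point $w=su+tv$ in the half-open fundamental parallelogram with $s+t>1$, it is the reflected point $u+v-w=(1-s)u+(1-t)v$ that lands in the closed triangle $T$ and is distinct from its vertices; with $s+t\le 1$ the point $w$ itself already works. This fills in the one phrase (``one extracts'') that a careful reader might pause on, but it is not a genuine gap.

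As for comparison with the paper: there is nothing to compare. The paper does not prove Proposition~\ref{pick-formula}; it is quoted as a known result with a reference to \cite[p.~248]{BG-book} and then \emph{applied} inside the proof of Proposition~\ref{apr14-13}. Your write-up therefore supplies a self-contained proof where the paper offers only a citation, and the approach you chose is the standard one found in most treatments of Pick's theorem.
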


\subsection{Full ideals}\index{full ideal}

Let $\RR = k[x,y]$
 and $\m = (x,y)$. An $\m$-primary ideal $I$ is said to be {\em $\m$-full} if
$\m I: a = I$ for some $a\in \m \setminus \m^2$ (see \cite[14.1.5]{HuSw}).
The element $a$ can be taken to be a linear
form not dividing the {\em content ideal} $c(I)$ of $I$ (see \cite[14.1.1, Proposition 14.1.7]{HuSw}).
If $I$ is a monomial ideal, $a$
can be taken to be a form  that is not a monomial since the content is monomial. We shall take $a=x+y$.

The fundamental characterization of $\m$-full ideals of two-dimensional regular local rings is the
following result due to Rees (\cite[Exercise~14.1]{HuSw}, \cite[Theorem 4]{JWat}):

\begin{Theorem}\label{Reestheorem} Let $I$ be an $\m$-primary ideal of a regular local ring $(\RR,\m)$ of dimension
two. Then $I$ is $\m$-full if and only if for all ideals $I\subset J$, $\mu(J)\leq \mu(I)$.
\end{Theorem}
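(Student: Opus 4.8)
The plan is to fix a sufficiently general linear form $a\in\m\setminus\m^2$ and to pass to the quotient $\overline{\RR}=\RR/(a)$, which is a discrete valuation ring because $\RR$ is regular of dimension two and $a\notin\m^2$. By the results of Rees quoted above, such an $a$ detects $\m$-fullness, in the sense that $I$ is $\m$-full if and only if $\m I:a=I$ (the nontrivial implication, that a general $a$ suffices, is the cited fact from \cite{HuSw}). To each $\m$-primary ideal $J$ I attach $\mu(J)$ and the integer $e(J):=\lambda\big(\RR/(J+(a))\big)=\lambda(\overline{\RR}/\overline{J})$, where $\overline{J}$ is the image of $J$ in $\overline\RR$ and $\lambda$ denotes length. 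The whole theorem will follow from three facts: (i) $\mu(J)\le e(J)+1$ for every $\m$-primary $J$, with equality exactly when $J$ is $\m$-full; (ii) $e$ is non-increasing, since $I\subseteq J$ gives $(I+(a))\subseteq(J+(a))$; and (iii) for general $a$ one has $e(J)=\operatorname{ord}(J):=\max\{r:J\subseteq\m^r\}$, because a general linear form divides none of the leading forms of a chosen generating set (this is where the hypothesis that $k$ is infinite enters, via a flat base change on the residue field if necessary).

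To obtain the inequality in (i), I would tensor $0\to J\to\RR\to\RR/J\to0$ with $\overline{\RR}=\RR/(a)$. Since $a$ is a nonzerodivisor on $\RR$ but a zerodivisor on the finite length module $\RR/J$, this produces a short exact sequence of $\overline\RR$-modules
\[
0\to (J:a)/J\to J/aJ\to \overline{J}\to 0 .
\]
As $\overline{J}$ is a nonzero ideal of the discrete valuation ring $\overline\RR$ it is free, so the sequence splits: $J/aJ\cong (J:a)/J\oplus\overline\RR$. Counting minimal generators over $\overline\RR$ then gives the \emph{exact} formula $\mu(J)=\mu\big((J:a)/J\big)+1$, while viewing $a$ as an endomorphism of the finite length module $\RR/J$ (for which $\lambda(\ker)=\lambda(\operatorname{coker})$) gives $\lambda\big((J:a)/J\big)=e(J)$. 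Since $\mu\le\lambda$ for any finite length module, $\mu(J)\le e(J)+1$, with equality precisely when $(J:a)/J$ is a $k$-vector space, that is, when $\m\,(J:a)\subseteq J$.

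The heart of the matter, and the step I expect to be the main obstacle, is to identify this equality condition with $\m$-fullness, i.e. to prove $\m(J:a)\subseteq J \Leftrightarrow \m J:a=J$. I would extract this from the same splitting. The multiplication map $a\colon(J:a)/J\to J/\m J$ factors as the inclusion $(J:a)/J\hookrightarrow J/aJ$ followed by the surjection $J/aJ\twoheadrightarrow J/\m J$; its kernel is on the one hand $(\m J:a)/J$, and on the other hand, traced through the decomposition $J/aJ\cong (J:a)/J\oplus\overline\RR$ (under which the passage to $J/\m J$ becomes reduction modulo $\m$ on each summand), exactly $\m\,\big((J:a)/J\big)$. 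Comparing the two descriptions yields the identity $\m J:a=J+\m(J:a)$, whence $\m J:a=J$ if and only if $\m(J:a)\subseteq J$; combined with the detection property this says that equality holds in (i) if and only if $J$ is $\m$-full, completing (i).

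Granting (i)--(iii), both implications of the theorem follow at once. If $I$ is $\m$-full, then for any $J\supseteq I$ one has $\mu(J)\le e(J)+1\le e(I)+1=\mu(I)$. For the converse I argue contrapositively: if $I$ is not $\m$-full, then (i) and (iii) give the strict bound $\mu(I)<e(I)+1=\operatorname{ord}(I)+1$, and the ideal $J=\m^{\operatorname{ord}(I)}\supseteq I$ satisfies $\mu(J)=\operatorname{ord}(I)+1>\mu(I)$, so that $I$ admits a larger ideal with strictly more generators and the generator bound fails.
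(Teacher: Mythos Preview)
The paper does not prove Theorem~\ref{Reestheorem}; it records it as a known result due to Rees, with references to \cite[Exercise~14.1]{HuSw} and \cite[Theorem~4]{JWat}. Your argument is correct and is, in fact, essentially the classical proof one finds in those sources: the key exact sequence $0\to (J:a)/J\to J/aJ\to \overline{J}\to 0$ over the DVR $\RR/(a)$, its splitting, the resulting formula $\mu(J)=\mu\big((J:a)/J\big)+1$, and the identification $\m J:a=J+\m(J:a)$ are exactly the ingredients of Watanabe's proof. Your derivation of the identity $\m J:a=J+\m(J:a)$ via the two descriptions of the kernel of $(J:a)/J\to J/\m J$ is clean and correct, and the contrapositive for the converse, exhibiting $J=\m^{\mathrm{ord}(I)}$ as a witness, is the standard conclusion. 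There is nothing to correct; you have supplied precisely the proof that the paper chose to outsource to the literature.
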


\medskip

In analogy to the existence of the integral closure of an ideal, let us consider the question of its {\rm $\m$-full} closure
in the sense of  a unique minimal {\rm $\m$-full} ideal
$J$ containing $I$. Since the set of {\rm $\m$-full} ideals containing $I$ is non-empty and satisfies the minimal chain condition
there may exist, to the authors' knowledge,  more than one  minimal element, a situation that makes appointing one of them as {\em the closure} not appropriate.
 The situation
is clearer if  we consider only  the set of   monomial ideals.

\begin{Proposition}\label{mfullclosmonid}
Let $I$ be an $\m$-primary monomial ideal.
Then its {\rm $\m$-full} monomial closure $I^*$ exists and it is integral over $I$.
\end{Proposition}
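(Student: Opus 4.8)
The plan is to realize $I^*$ as the least element of the family
\[
  \mathcal{S} := \{\, J \subseteq \RR \ :\ J \text{ is an } \m\text{-primary monomial ideal},\ I \subseteq J,\ \m J : (x+y) = J \,\}.
\]
Here I test $\m$-fullness of monomial ideals against the single linear form $a=x+y$, which is legitimate because, as noted before Theorem~\ref{Reestheorem}, a monomial ideal is $\m$-full if and only if $\m J:(x+y)=J$; thus $\mathcal{S}$ is exactly the set of $\m$-full monomial ideals containing $I$. The two assertions to establish are that $\mathcal{S}$ possesses a (necessarily unique) smallest element, and that this element lies inside $\overline{I}$.

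First I would record the key closure property: if $J_1,J_2\in\mathcal{S}$ then $J_1\cap J_2\in\mathcal{S}$. Since $J_1\cap J_2$ is again a monomial ideal containing $I$, only $\m$-fullness needs checking. For the inclusion $\supseteq$, the containment $x+y\in\m$ forces $(x+y)(J_1\cap J_2)\subseteq \m(J_1\cap J_2)$, so $J_1\cap J_2\subseteq \m(J_1\cap J_2):(x+y)$. For the reverse inclusion, from $\m(J_1\cap J_2)\subseteq \m J_i$ and the fact that forming colons distributes over intersections I obtain
\[
  \m(J_1\cap J_2):(x+y)\ \subseteq\ \bigl(\m J_1:(x+y)\bigr)\cap\bigl(\m J_2:(x+y)\bigr)\ =\ J_1\cap J_2 .
\]
Hence $\mathcal{S}$ is closed under finite intersection.

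Next comes existence together with integrality, and both flow from a single witness. Because $I$ is $\m$-primary, $\RR/I$ is Artinian, so $\mathcal{S}$ has a minimal element $I^*$ with respect to inclusion as soon as $\mathcal{S}\neq\emptyset$. I would take the integral closure $\overline{I}$ as the seed: it is a monomial ideal by Proposition~\ref{may29-05}(\ref{ii}), it contains $I$, and, being an integrally closed $\m$-primary ideal in the two-dimensional regular setting, it is $\m$-full (a classical fact, see \cite{HuSw}); thus $\overline{I}\in\mathcal{S}$. The closure property now upgrades minimality to leastness: for any $J\in\mathcal{S}$ we have $I^*\cap J\in\mathcal{S}$ with $I^*\cap J\subseteq I^*$, so minimality gives $I^*\cap J=I^*$, i.e.\ $I^*\subseteq J$. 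Applying this to $J=\overline{I}$ yields $I^*\subseteq\overline{I}$, which says precisely that $I^*$ is integral over $I$; and applying it to arbitrary $J$ shows $I^*$ is the unique smallest $\m$-full monomial ideal containing $I$.

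I expect the one genuinely nontrivial input, and hence the main obstacle, to be seeding $\mathcal{S}$ with an explicit $\m$-full ideal, that is, the fact that $\overline{I}$ is already $\m$-full; once this is granted, the intersection lemma and the Artinian minimality argument are formal. If one prefers to avoid this external citation, the alternative is a direct combinatorial construction of $I^*$ by successively adjoining the lattice points forced by the $\m$-fullness inequalities (in the spirit of the characterization recorded in Theorem~\ref{p-w-v}) and then verifying by a polytope computation that the resulting monomial ideal is both $\m$-full and integral over $I$; that route trades the classical fact for an explicit staircase argument.
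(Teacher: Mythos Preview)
Your argument is correct but follows a genuinely different route from the paper's. The paper constructs $I^*$ iteratively from below: starting from $I_0=I$, it repeatedly applies the operator $J\mapsto M(\m J:(x+y))$, where $M(L)$ is the ideal generated by all monomials occurring in elements of $L$. The key observation is that for any $\m$-full monomial ideal $L\supseteq I$ one has $\m I:(x+y)\subseteq\m L:(x+y)=L$, and hence $M(\m I:(x+y))\subseteq L$ because $L$ is monomial; thus the ascending chain $I\subset I_1\subset I_2\subset\cdots$ remains inside every member of your family $\mathcal{S}$, and its stable value is $I^*$. You instead work from above: a short colon computation shows $\mathcal{S}$ is closed under pairwise intersection, the Artinian property of $\RR/I$ supplies a minimal element, and the intersection closure upgrades minimality to leastness. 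Both arguments ultimately invoke that $\overline{I}$ is monomial and $\m$-full to obtain $I^*\subseteq\overline{I}$. The paper's route buys an explicit algorithm for computing $I^*$ (used in the example that immediately follows), while yours is slicker and isolates the structural reason for uniqueness---that $\m$-full ideals tested against a fixed element are stable under intersection---a fact the paper does not state. Incidentally, the ``alternative'' you sketch in your final paragraph, building $I^*$ by successively adjoining forced monomials, is in spirit exactly what the paper does.
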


\begin{proof}  For any ideal $L$ of a polynomial ring $k[x_1, \ldots, x_d]$, we
denote by $M(L)$ the ideal generated by all monomials that occur in the representation of the
elements of $L$. It is clear that $M(L)$ is defined by  the monomials that occur in any generating set
for $L$.
Note that if $J$ is a monomial ideal and $L\subset J$, then $M(L) \subset J$.
 \begin{itemize}
 \item Consider the set of all {\rm $\m$-full} monomial ideals that  contain the monomial ideal
 $I$.
For each such
{\rm $\m$-full} monomial ideal $L$, we have
\[ \m I: x+y \subset  \m L: x+y = L.\]
\item
If $\m I: x+y \neq I$, that is if $I$ is not {\rm $\m$-full}, note
that $ M(\m I: x+y)$ properly
contains $I$ but it is still contained in $L$.
In this case, set $I_1 = M(\m I: x+y)$
and apply the previous step to it. This process defines an increasing chain of
monomial ideals $I\subset I_1 \subset I_2 \subset \cdots$, contained in $L$ whose stable ideal
$I^*$ is {\rm $\m$-full}.
\item Considering that the integral closure
$\bar{I}$ of $I$ is monomial and $\m$-full we have $I^*\subset \bar{I}$.
\end{itemize}
\end{proof}

\begin{Example} {\rm Let $I = (x^3, y^5)$. Then $\m I:x+y = (x^3, x^2y^3-xy^4, y^5)$. Thus
$I_1 = (x^3, x^2y^3, xy^4, y^5)$ and $\m I_1:x+y = I_1$. Thus $I^* = I_1$.
}\end{Example}

Let us cast Theorem~\ref{Reestheorem} for monomial ideals of $k[x,y]$ into an effective  form for later usage.

\begin{Theorem}\label{p-w-v}
If $I$ is minimally generated by $n$ monomials that are listed lexicographically,
$I=(x^{a_1}, x^{a_2}y^{b_{n-1}},\ldots,x^{a_i}y^{b_{n-i+1}}, \ldots,
x^{a_{n-1}}y^{b_2}, y^{b_1})$,
then $I$ is $\m$-full if and only if there is $1\leq k\leq n$ such
that the following conditions hold
\begin{enumerate}
\item
$b_{n-i}-b_{n-i+1}=1$ for $1\leq i\leq k-1$,
\item
$k=n$ or $k<n$ and $b_{n-k}-b_{n-k+1}\geq 2$,
\item
$a_i-a_{i+1}=1$ for $k\leq i\leq n-1$.
\end{enumerate}
\end{Theorem}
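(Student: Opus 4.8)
The plan is to reduce the combinatorial statement to a single numerical invariant, the \emph{order} $\operatorname{ord}(I)=\min_{1\le i\le n}(a_i+b_{n-i+1})$, i.e.\ the least total degree of a minimal generator. I claim that $I$ is $\m$-full if and only if $\operatorname{ord}(I)=n-1$, and that this equality is exactly what conditions (1)--(3) encode. The first thing I would record is a universal lower bound: since $a_i>a_{i+1}>\cdots>a_n=0$ are $n-i+1$ distinct non-negative integers, one has $a_i\ge n-i$, and symmetrically $b_{n-i+1}\ge i-1$. Hence $d_i:=a_i+b_{n-i+1}\ge (n-i)+(i-1)=n-1$ for every $i$, so $\operatorname{ord}(I)\ge n-1$ always, with equality precisely when $d_i=n-1$ for some $i$; and because each summand already meets its own bound, $d_i=n-1$ forces $a_i=n-i$ and $b_{n-i+1}=i-1$ simultaneously.

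Next I would prove the equivalence ``$I$ is $\m$-full $\iff \operatorname{ord}(I)=n-1$'' using Theorem~\ref{Reestheorem}. For the forward direction, assume $I$ is $\m$-full and apply Rees's criterion to $J=\m^{\operatorname{ord}(I)}$, which contains $I$ by definition of the order. In two variables $\mu(\m^t)=t+1$, so $\mu(J)\le\mu(I)=n$ gives $\operatorname{ord}(I)+1\le n$; combined with the bound above this yields $\operatorname{ord}(I)=n-1$. For the converse, suppose $\operatorname{ord}(I)=n-1$ and let $J\supseteq I$ be arbitrary, hence $\m$-primary. Then $\operatorname{ord}(J)\le\operatorname{ord}(I)=n-1$, and the standard bound $\mu(J)\le\operatorname{ord}(J)+1$ for $\m$-primary ideals of a two-dimensional regular local ring gives $\mu(J)\le n=\mu(I)$; Theorem~\ref{Reestheorem} then shows $I$ is $\m$-full. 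A self-contained alternative for this converse is the direct computation $\lambda\big((\m I:(x+y))/I\big)=\operatorname{ord}(I)+1-n$: multiplication by $x+y$ on $\RR/\m I$ has kernel and cokernel of equal length, and $\RR/(\m I+(x+y))\cong k[x]/(x^{\operatorname{ord}(I)+1})$ after setting $y=-x$, so when $\operatorname{ord}(I)=n-1$ the colon ideal collapses to $I$.

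It remains to translate $\operatorname{ord}(I)=n-1$ into conditions (1)--(3). If $d_i=n-1$ then $a_i=n-i$, and since $a_i>a_{i+1}>\cdots>a_n=0$ are then $n-i+1$ integers forced to fill $\{0,1,\dots,n-i\}$, we get $a_j-a_{j+1}=1$ for all $j\ge i$, which is condition (3) with $k=i$. The same pigeonhole applied to $b_{n-i+1}=i-1$ yields $b_\ell-b_{\ell+1}=1$ for $n-i+1\le\ell\le n-1$, which is condition (1) with $k=i$. To secure condition (2) as well, I would take $k$ to be the \emph{largest} index with $d_k=n-1$: if $k<n$ and the next difference $b_{n-k}-b_{n-k+1}$ were equal to $1$, then from $a_{k+1}=n-k-1$ and $b_{n-k}=k$ one computes $d_{k+1}=n-1$, contradicting maximality, so $b_{n-k}-b_{n-k+1}\ge 2$. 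Conversely, conditions (1) and (3) for a given $k$ force $a_k=n-k$ and $b_{n-k+1}=k-1$, hence $d_k=n-1$ and $\operatorname{ord}(I)=n-1$.

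The conceptual crux is the equivalence with the order condition; once it is in place the remainder is elementary counting. The main obstacle I anticipate is organizing this ``propagation'' and the selection of the distinguished $k$: one must check that conditions (1) and (3) meet compatibly at the single shared index $k$, where the unit $y$-run joins the unit $x$-run at the corner $P_k=(n-k,k-1)$, and that condition (2) pins down that corner as the unique transition between the two regimes. A secondary point to handle carefully is the bound $\mu(J)\le\operatorname{ord}(J)+1$ for arbitrary, possibly non-monomial, ideals $J\supseteq I$; the length computation sketched above sidesteps it entirely if a fully self-contained argument is preferred.
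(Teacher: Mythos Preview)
Your proof is correct and follows essentially the same strategy as the paper: both reduce $\m$-fullness to the condition $\operatorname{ord}(I)=n-1$ via Rees's criterion (Theorem~\ref{Reestheorem}) and then translate that equality into conditions (1)--(3) by a pigeonhole/propagation argument, selecting the extremal degree-$(n-1)$ generator to pin down $k$. Your write-up is somewhat more explicit than the paper's---you establish the a priori bound $\operatorname{ord}(I)\ge n-1$ up front and offer the self-contained length computation $\lambda((\m I:(x+y))/I)=\operatorname{ord}(I)+1-n$ as an alternative to invoking the bound $\mu(J)\le\operatorname{ord}(J)+1$---but the substance is the same.
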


\begin{proof}
We first show that $I$ is $\m$-full if and only if  $\mbox{\rm order}(I)= n-1$.
Thus, suppose  $\mbox{\rm order}(I)= n-1$, i.e., there is an element $x^{k}y^{n-1-k}\in I$.
Note that $I\subset J$ implies $\mbox{\rm order}(J)\leq \mbox{\rm order}(I)$. Since
$\mu(J) \leq \mbox{\rm order}(J)+1\leq n$, $I$ satisfies Theorem~\ref{Reestheorem}.

Conversely, if $I$ is $\m$-full, $\mbox{\rm order}(I)\leq n-1$, as otherwise $I \subset (x,y)^n$, which has
$n+1$ minimal generators, which would violate Theorem~\ref{Reestheorem}.

\medskip

Now, granted  $\mbox{\rm order}(I)= n-1$, suppose an occurrence of a monomial of degree $n-1$ is  $x^k y^{n-1-k}$. This means that there are at
most $n-1-k$ elements prior to $x^ky^{n-1-k}$ and at most $k$ elements after.
This gives
\[I=(x^{a_1}, x^{a_2}y,\ldots,x^{a_{k-1}}y^{{n-2-k}}, x^ky^{n-1-k}, x^{k-1}y^{b_{n-k}},  \ldots, xy^{b_2},
 y^{b_1}).\]
By choosing $k$ as small as possible we achieve all three conditions.

Conversely, it is clear that the set of the three stated conditions implies  $\mbox{\rm order}(I)= n-1$.
\end{proof}

\begin{Corollary}\label{corq}
Let $I$ be an ideal minimally generated by $n$
monomials that are listed lexicographically,
$I=(x^{a_1}, x^{a_2}y^{b_{n-1}},
\ldots,x^{a_i}y^{b_{n-i+1}},\ldots, x^{a_{n-1}}y^{b_2}, y^{b_1})$.
Suppose that $I$ is normal.
\begin{enumerate}
\item
For every $i$, either  $a_i-a_{i+1}= 1$ or $b_{n-i}-b_{n-i+1}= 1$.
\item\label{q2}
If $b_{n-i}-b_{n-i+1}>1$ for some $i$, then $a_{i}-a_{i+1}=1$ and $a_{i+1}-a_{i+2}=1$.
\end{enumerate}
\end{Corollary}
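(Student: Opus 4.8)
The plan is to obtain both assertions as purely numerical consequences of Theorem~\ref{p-w-v}, with normality used only to place $I$ inside the $\m$-full regime. First I would record the reduction: a normal ideal is in particular integrally closed (the case $m=1$ of normality), so $I=\overline{I}$; and the integral closure of a monomial ideal is a monomial $\m$-full ideal---the very fact already used in the last step of the proof of Proposition~\ref{mfullclosmonid}. Hence $I$ is $\m$-full, and Theorem~\ref{p-w-v} applies. This is the only role played by normality; everything afterwards is bookkeeping on the stair sequences and uses $\m$-fullness alone.

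Next I would invoke Theorem~\ref{p-w-v} to fix an index $1\le k\le n$ for which $b_{n-i}-b_{n-i+1}=1$ whenever $1\le i\le k-1$ (the first batch of conditions) and $a_i-a_{i+1}=1$ whenever $k\le i\le n-1$ (the third). For part~(1) I would simply split the range: each $i$ with $1\le i\le n-1$ lies in $\{1,\ldots,k-1\}$ or in $\{k,\ldots,n-1\}$, and in the first case the $b$-difference equals $1$ while in the second the $a$-difference equals $1$. Since these two blocks exhaust $\{1,\ldots,n-1\}$, at least one of the two equalities always holds.

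For part~(2) I would argue by contraposition on the first batch: if $b_{n-i}-b_{n-i+1}>1$, then $i\notin\{1,\ldots,k-1\}$, so $i\ge k$. Reading part~(2) for $1\le i\le n-2$ (so that $a_{i+2}$ is defined, with $a_n=0$ covering the top end), both $i$ and $i+1$ then lie in $\{k,\ldots,n-1\}$, whence the $a$-difference condition gives $a_i-a_{i+1}=1$ and $a_{i+1}-a_{i+2}=1$ at once.

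I do not expect a genuine obstacle: the statement is essentially a transcription of the conditions of Theorem~\ref{p-w-v}. The two points that need care are the reduction normal $\Rightarrow$ $\m$-full (legitimate because normality forces $I=\overline{I}$ and $\overline{I}$ is $\m$-full) and the index ranges---in particular the boundary cases $k=1$ and $k=n$, where one of the two blocks of conditions is empty but the conclusions of (1) and (2) still hold, vacuously or because every difference in the surviving block equals $1$.
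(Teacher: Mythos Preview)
Your proposal is correct and follows exactly the paper's approach: the paper's proof consists of the single sentence that both claims follow readily from Theorem~\ref{p-w-v} because complete monomial ideals of $k[x,y]$ are $\m$-full (citing \cite[Theorem~14.1.8]{HuSw}), and you have simply unpacked that sentence, making explicit the case split on $i<k$ versus $i\ge k$ and the index bookkeeping. Your handling of the boundary case $i=n-1$ in part~(2) and of the reduction normal $\Rightarrow$ $\m$-full are both fine.
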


\begin{proof}
Both claims follow readily from Theorem~\ref{p-w-v} because
complete monomial ideals of $k[x,y]$ are $\m$-full
\cite[Theorem~14.1.8]{HuSw}.
\end{proof}

This result has been already observed in \cite[p.~369]{crispin-quinonez}.
In the same work, the following terminology was introduced for zero-dimensional monomial ideals
$I\subset k[x,y]$, whose generators are ordered as above:
$I$ is called $x$-{\it tight} (resp. $y$-{\it tight}) if
$a_i-a_{i+1}=1$ for all $i$ (resp. $b_i-b_{i+1}=1$ for all $i$).

Putting together the previous result and these notions, we have:

\begin{Corollary}\label{mfull_tight}
If $I$ as above is $\m$-full then it is the product of an $x$-tight ideal and a $y$-tight ideal.
\end{Corollary}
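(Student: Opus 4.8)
The plan is to read off from Theorem~\ref{p-w-v} both the integer $k$ and the precise shape of the exponents, and then to exhibit the two tight factors explicitly. Condition (1) says exactly that the $y$-exponents of the first $k$ generators are $b_{n-i+1}=i-1$ for $1\le i\le k$, while condition (3) says the $x$-exponents of the last $n-k+1$ generators are $a_i=n-i$ for $k\le i\le n$; in particular the $k$-th generator is the corner monomial $x^{n-k}y^{k-1}$ of degree $n-1$. This shape suggests the candidate factors
\[
I_y = (\, x^{\,a_i-a_k}\,y^{\,i-1}\ :\ 1\le i\le k\,),\qquad
I_x = (\, x^{\,a_j}\,y^{\,b_{n-j+1}-(k-1)}\ :\ k\le j\le n\,),
\]
which are obtained by dividing the first $k$ generators of $I$ by their greatest common monomial factor $x^{a_k}=x^{n-k}$ and the last $n-k+1$ generators by their greatest common monomial factor $y^{\,b_{n-k+1}}=y^{\,k-1}$.

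First I would check that these factors are of the required type. By construction the $y$-exponents of $I_y$ are the consecutive integers $0,1,\dots,k-1$, so $I_y$ is $y$-tight; and the $x$-exponents of $I_x$ are $a_k,a_{k+1},\dots,a_n=n-k,n-k-1,\dots,0$, which are consecutive precisely by condition (3), so $I_x$ is $x$-tight. I would also dispatch the degenerate cases $k=1$ and $k=n$ separately: there condition (3) (resp.\ (1)) makes $I$ itself $x$-tight (resp.\ $y$-tight), and the complementary factor reduces to the unit ideal, which is vacuously tight.

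It then remains to prove that $I=I_xI_y$. The inclusion $I\subseteq I_xI_y$ is immediate from the definitions: for $i\le k$ the generator $x^{a_i}y^{i-1}$ of $I$ is the product of the $i$-th generator of $I_y$ with the corner generator $x^{a_k}$ of $I_x$, and for $j\ge k$ the generator $x^{a_j}y^{b_{n-j+1}}$ of $I$ is the product of the corner generator $y^{k-1}$ of $I_y$ with the $j$-th generator of $I_x$. For the reverse inclusion I must show that every product of a generator of $I_y$ by a generator of $I_x$ lies in $I$. After substituting $a_k=n-k$, $a_j=n-j$ and $b_{n-k+1}=k-1$, such a product has exponent vector
\[
Q_{ij}=\bigl(a_i+k-j,\ b_{n-j+1}+i-k\bigr),\qquad 1\le i\le k\le j\le n,
\]
and the claim is that $Q_{ij}$ dominates the lattice point $P_\ell$ for the intermediate index $\ell=i+j-k$, which satisfies $i\le\ell\le j$ and hence $1\le\ell\le n$. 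Comparing the two coordinates of $Q_{ij}$ and $P_\ell=(a_\ell,b_{n-\ell+1})$ reduces the claim to the inequalities $a_i-a_\ell\ge j-k$ and $b_{n-j+1}-b_{n-\ell+1}\ge k-i$, and these hold because $\ell-i=j-k$ and $(n-\ell+1)-(n-j+1)=k-i$, while the strictly decreasing integer sequences $\{a_i\}$ and $\{b_i\}$ drop by at least one at each step.

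I expect the only real obstacle to be guessing the correct matching index $\ell=i+j-k$ for the reverse inclusion; once it is in hand, the verification is just the elementary estimate that a strictly decreasing integer sequence decreases by at least the number of steps taken. All the remaining work is bookkeeping that translates the three conditions of Theorem~\ref{p-w-v} into coordinates and exhibits the generators of $I$ as the ``diagonal'' products coming from the common corner $x^{n-k}y^{k-1}$.
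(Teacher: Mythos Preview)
Your argument is correct. The explicit factorization you write down is exactly the right one, the inclusion $I\subseteq I_xI_y$ is immediate since every generator of $I$ arises as the product of a generator of one factor by the corner generator of the other, and for the converse your matching index $\ell=i+j-k$ together with the elementary observation that a strictly decreasing integer sequence drops by at least the number of steps does the job cleanly. The degenerate cases $k=1$ and $k=n$ are handled as you say.

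The paper proceeds differently: rather than constructing the factors by hand, it simply invokes Theorem~\ref{p-w-v} to obtain the staircase shape and then quotes \cite[Proposition~2.2]{crispin-quinonez}, where the factorization of an ideal of this shape into an $x$-tight times a $y$-tight ideal is carried out. Your route has the advantage of being entirely self-contained---you never leave the paper---and of making the two factors completely explicit as the first $k$ and the last $n-k+1$ generators of $I$ stripped of their common monomial divisor. The paper's route is shorter on the page but relies on an external reference for the actual work. Both approaches ultimately produce the same factorization; yours just exhibits it and verifies it directly.
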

\begin{proof}
It follows from Theorem~\ref{p-w-v} and \cite[Proposition~2.2]{crispin-quinonez}.
\end{proof}

 \subsection{Normality criteria}
 We now proceed to establish separate necessary or sufficient conditions for normality
 in terms of the associated monomial exponents.

First necessary conditions:

\begin{Proposition}\label{apr14-13}
Let $I$ be an ideal minimally generated by $n$
monomials that are listed lexicographically,
$I=(x^{a_1}, x^{a_2}y^{b_{n-1}},
\ldots,x^{a_i}y^{b_{n-i+1}},\ldots, x^{a_{n-1}}y^{b_2}, y^{b_1})$
and let $P_i=(a_i,b_{n-i+1})$, $P_{i+1}=(a_{i+1},b_{n-i})$,
$P_{i+2}=(a_{i+2},b_{n-i-1})$ be three consecutive points corresponding to the
exponents of the defining monomials of $I$.
The following hold:
\begin{enumerate}
\item[(a)]
If $I=\overline{I}$ and $b_{n-i-1}-b_{n-i}=1$,
$b_{n-i}-b_{n-i+1}=1$, then $a_{i+1}\leq
\lceil\frac{a_{i}+a_{i+2}}{2}\rceil$.
\item[(b)]
If $I=\overline{I}$ and $a_{i}-a_{i+1}=1$,
$a_{i+1}-a_{i+2}=1$, then $b_{n-i}\leq
\lceil\frac{b_{n-i-1}+b_{n-i+1}}{2}\rceil$.
\end{enumerate}
\end{Proposition}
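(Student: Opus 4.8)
The plan is to read the inequality straight off the Newton polyhedron $\mathcal{Q}$ of $I$, exploiting that normality turns $I$ into the exact set of lattice points of $\mathcal{Q}$. Indeed, by Proposition~\ref{may29-05}(\ref{ii}) applied with $m=1$, the hypothesis $I=\overline{I}$ means precisely that a monomial $x^py^q$ lies in $I$ if and only if $(p,q)\in\mathcal{Q}\cap\mathbb{Z}^2$. I will prove part (a); part (b) then follows by interchanging the roles of $x$ and $y$. That reflection sends $I$ to a monomial ideal whose stair data are obtained from those of $I$ by swapping the sequences $(a_j)$ and $(b_j)$ (with the points relisted in lexicographic order), and it preserves integral closedness; after reindexing, part (a) for the reflected ideal reads exactly as part (b) for $I$.

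For part (a), write $\beta:=b_{n-i+1}$. The hypotheses $b_{n-i}-b_{n-i+1}=1$ and $b_{n-i-1}-b_{n-i}=1$ say exactly that the three consecutive points are
$$P_i=(a_i,\beta),\qquad P_{i+1}=(a_{i+1},\beta+1),\qquad P_{i+2}=(a_{i+2},\beta+2),$$
so that $P_{i+1}$ sits at the midpoint height of $P_i$ and $P_{i+2}$; this is the sole place where the gap hypotheses enter. Set $m:=\tfrac{a_i+a_{i+2}}{2}$. Since $P_i,P_{i+2}\in\mathcal{Q}$ and $\mathcal{Q}$ is convex, the whole segment ${\rm conv}(P_i,P_{i+2})$ lies in $\mathcal{Q}$, and its point at height $\beta+1$ is exactly $(m,\beta+1)$; hence $(m,\beta+1)\in\mathcal{Q}$. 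Because $\mathcal{Q}=\mathbb{R}_{\geq 0}^2+{\rm conv}(\ldots)$ is stable under translation by $\mathbb{R}_{\geq 0}(1,0)$, every point $(p,\beta+1)$ with $p\geq m$ also lies in $\mathcal{Q}$.

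Now I argue by contradiction. Suppose $a_{i+1}>\lceil m\rceil$, i.e. $a_{i+1}\geq\lceil m\rceil+1$. Then $a_{i+1}-1\geq\lceil m\rceil\geq m$, so $(a_{i+1}-1,\beta+1)\in\mathcal{Q}\cap\mathbb{Z}^2$, whence $x^{a_{i+1}-1}y^{\beta+1}\in\overline{I}=I$. But $x^{a_{i+1}}y^{\beta+1}=x\cdot x^{a_{i+1}-1}y^{\beta+1}$ would then be a proper multiple of an element of $I$, contradicting that $x^{a_{i+1}}y^{b_{n-i}}=x^{a_{i+1}}y^{\beta+1}$ is a \emph{minimal} generator of $I$. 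Therefore $a_{i+1}\leq\lceil m\rceil=\lceil\tfrac{a_i+a_{i+2}}{2}\rceil$, as claimed.

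I expect the only delicate point to be the passage from the continuous inclusion (the segment lies in $\mathcal{Q}$) to the integral inequality (a bound involving the ceiling). This is exactly what the translation-stability of $\mathcal{Q}$ and the integer estimate $a_{i+1}-1\geq\lceil m\rceil$ resolve, and it is precisely where normality $\overline{I}=I$ is indispensable: without it, $(a_{i+1}-1,\beta+1)$ would only produce an element of the integral closure and no contradiction with minimality. I note for completeness that the same inequality can be obtained by a lattice-point count, using Pick's Formula (Proposition~\ref{pick-formula}) on the triangle ${\rm conv}(P_i,P_{i+1},P_{i+2})$; the convexity argument above, however, reaches the bound most directly.
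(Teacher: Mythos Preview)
Your argument is correct, and it is genuinely different from the paper's. The paper proves part~(a) by a lattice-point counting argument: after disposing of the collinear case, it assumes $a_{i+1}-\tfrac{a_i+a_{i+2}}{2}\geq 1$, shows that the triangle $\mathcal{P}={\rm conv}(P_i,P_{i+1},P_{i+2})$ has no boundary lattice points other than its vertices, and then invokes Pick's formula to deduce that $\mathcal{P}^{\rm o}$ contains a lattice point; a careful coordinate computation with that interior point yields the contradiction. Your route is shorter and more elementary: you simply observe that the midpoint of $[P_i,P_{i+2}]$ lies at height $\beta+1$ with first coordinate $m=\tfrac{a_i+a_{i+2}}{2}$, use the upward-translation stability of the Newton polyhedron $\mathcal{Q}$ to conclude $(a_{i+1}-1,\beta+1)\in\mathcal{Q}\cap\mathbb{Z}^2$ once $a_{i+1}-1\geq\lceil m\rceil$, and then contradict minimality of the generator $x^{a_{i+1}}y^{\beta+1}$. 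What the paper's approach buys is a direct tie to Pick's formula (Proposition~\ref{pick-formula}), which it advertises earlier as a tool; what your approach buys is economy---Pick's formula is not needed at all, and the contradiction is reached in two lines. The symmetry reduction for~(b) is the same in both.

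One small citation slip: when you write ``Proposition~\ref{may29-05}(\ref{ii}) applied with $m=1$'', you mean item~(1) of that proposition, since item~(\ref{ii}) is the statement about $(\mathcal{Q}+[0,1)^d)\cap\mathbb{N}^d$ and has no parameter $m$. The fact you actually use, $x^a\in\overline{I}\iff a\in\mathcal{Q}\cap\mathbb{Z}^2$, is exactly item~(1) with $m=1$.
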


\begin{proof}
(a): First we assume that $P_{i+1}\in{\rm conv}(P_i,P_{i+2})$. Then,
we can write $P_{i+1}=\lambda_1P_i+\lambda_2P_{i+2}$, where
$\lambda_i>0$, $i=1,2$ and $\lambda_1+\lambda_2=1$. It is not hard to
see that $\lambda_i=1/2$ for $i=1,2$. Thus, one has
$a_{i+1}=\frac{a_{i}+a_{i+2}}{2}=\lceil\frac{a_{i}+a_{i+2}}{2}\rceil$.

We may now assume that $P_{i+1}\notin{\rm conv}(P_i,P_{i+2})$. We
proceed by contradiction
assuming that $a_{i+1}>\lceil\frac{a_{i}+a_{i+2}}{2}\rceil$, that is
$a_{i+1}-\frac{a_{i}+a_{i+2}}{2}\geq 1$. Consider the convex polytope
$\mathcal{P}$ whose vertices are $P_i,P_{i+1},P_{i+2}$. We claim that
$\partial\mathcal{P}\cap\mathbb{Z}^2=\{P_i,P_{i+1},P_{i+2}\}$. Clearly
${\rm conv}(P_i,P_{i+1})^{\rm o}\cap\mathbb{Z}^2=\emptyset$ and
${\rm conv}(P_{i+1},P_{i+2})^{\rm o}\cap\mathbb{Z}^2=\emptyset$
because $b_{n-i-1}-b_{n-i}=1$ and $b_{n-i}-b_{n-i+1}=1$. We claim
 that also ${\rm conv}(P_i,P_{i+2})^{\rm o}\cap\mathbb{Z}^2=\emptyset$.
Indeed, if this set is non-empty, pick an integral point $(c_1,c_2)$ in ${\rm
conv}(P_i,P_{i+2})^{\rm o}$. By Proposition~\ref{may29-05}, the
monomial $x^{c_1}y^{c_2}$ is in $\overline{I}=I$. Then we can write
\begin{eqnarray}
c_1&=& ta_{i+2}+(1-t)a_i=\epsilon_1+a_j\label{eq1},\\
c_2&=& tb_{n-i-1}+(1-t)b_{n-i+1}=\epsilon_2+b_{n-j+1}\label{eq2},
\end{eqnarray}
for some $j$, where $\epsilon_1,\epsilon_2$ are in $\mathbb{N}$ and
$0<t<1$. From Eq.~(\ref{eq1}), we get $a_i>c_1=\epsilon_1+a_j$.
Thus $i<j$. From Eq.~(\ref{eq2}), we get
\begin{eqnarray*}
c_2&=&t(b_{n-i-1}-b_{n-i+1})+b_{n-i+1}=2t+b_{n-i+1}=\epsilon_2+b_{n-j+1}.
\end{eqnarray*}
Hence $2+b_{n-i+1}>\epsilon_2+b_{n-j+1}$. If $\epsilon_2 \geq 1$,
then $b_{n-i+1}-b_{n-j+1}\geq 0$ and consequently $i\geq j$, a
contradiction. Hence, $\epsilon_2=0$, $j=i+1$ and $t=1/2$. Therefore
from Eq.~(\ref{eq1}), we obtain
$\epsilon_1=\frac{a_{i}+a_{i+2}}{2}-a_{i+1}\geq 0$, a contradiction.
This completes the proof of the claim. As a consequence, using Pick's
formula (Proposition~\ref{pick-formula}), one has
\begin{equation}\label{eq3}
{\rm area}(\mathcal{P})=|\mathcal{P}^{\rm
o}\cap\mathbb{Z}^2|+\frac{1}{2}.
\end{equation}
The equation of the line passing through $P_i$ and $P_{i+2}$ is
$$
x_1(b_{n-i-1}-b_{n-i+1})+x_2(a_i-a_{i+2})=a_i(b_{n-i-1}-b_{n-i+1})+(a_i-a_{i+2})b_{n-i+1}.
$$
Since $a_{i+1}-\frac{a_{i}+a_{i+2}}{2}\geq 1$, the point $P_{i+1}$
lies above this line. It follows readily that the area of
$\mathcal{P}$ is given by
$$
{\rm area}(\mathcal{P})=a_{i+1}-\frac{a_{i}+a_{i+2}}{2}\geq 1.
$$
Hence, by Eq.~(\ref{eq3}), $\mathcal{P}^{\rm
o}\cap\mathbb{Z}^2\neq\emptyset$.
Pick an integral point $(c_1,c_2)$ in $\mathcal{P}^{\rm o}$. By Proposition~\ref{may29-05}, the
monomial $x^{c_1}y^{c_2}$ is in $\overline{I}=I$. Then we can write
\begin{eqnarray}
c_1&=&
\lambda_1a_i+\lambda_2a_{i+1}+\lambda_3a_{i+2}=\epsilon_1+a_j\label{eq1-bis},\\
c_2&=&
\lambda_1b_{n-i+1}+\lambda_2b_{n-i}+\lambda_3b_{n-i-1}=\epsilon_2+b_{n-j+1},\label{eq2-bis}
\end{eqnarray}
for some $j$, where $\epsilon_1,\epsilon_2$ are in $[0,1)$,
$\lambda_i>0$ for $i=1,2,3$ and $\lambda_1+\lambda_2+\lambda_3=1$.
From Eqs.~(\ref{eq1-bis}) and (\ref{eq2-bis}),
we get $a_i>c_1=\epsilon_1+a_j$ and $b_{n-i-1}>c_2=\epsilon_2+b_{n-j+1}$.
Thus $i<j$ and $-2<i-j$, i.e., $j=i+1$. Therefore we can rewrite
Eq.~(\ref{eq2-bis}) as
\begin{eqnarray*}
c_2
&=&\lambda_1(b_{n-i}-1)+\lambda_2b_{n-i}+\lambda_3(b_{n-i}+1)\\
&=&b_{n-i}-\lambda_1+\lambda_3
=\epsilon_2+b_{n-i}.
\end{eqnarray*}
As a consequence $-\lambda_1+\lambda_3=\epsilon_2\geq 0$. Hence
$\epsilon_2$ must be zero because $\epsilon_2<\lambda_3<1$. Then from
From Eq.~(\ref{eq1-bis}), we get
\begin{eqnarray*}
c_1&=&\lambda_1(a_i+a_{i+2})+\lambda_2a_{i+1}=\lambda_1(a_i+a_{i+2})+
(1-2\lambda_1)a_{i+1}
=\epsilon_1+a_{i+1}.
\end{eqnarray*}
Thus $\lambda_1(a_i+a_{i+2}-2a_{i+1})=\epsilon_1\geq 0$, and hence
$a_i+a_{i+2}-2a_{i+1}\geq 0$, a contradiction.

(b): Notice that the ideal obtained from $I$ by permuting $x$ and $y$
is also normal. Thus this part follows from (a).
\end{proof}

\medskip

Putting together Corollary~\ref{corq} and Proposition~\ref{apr14-13} we obtain the following:

\begin{Theorem}\label{dim2eff}
Let  $I$  be minimally generated by $n$ monomials that are listed lexicographically,
$I=(x^{a_1}, x^{a_2}y^{b_{n-1}},\ldots,x^{a_i}y^{b_{n-i+1}}, \ldots, x^{a_{n-1}}y^{b_2}, y^{b_1})$.
If $I$ is normal, then there exists $k$, $1\leq k\leq n$, such that
\begin{enumerate}
\item
$a_{n-1}=1$, $a_{n-2}=2$, \ldots, $a_{k}=n-k$,
\item
$b_{n-1}=1$, $b_{n-2}=2$, \ldots, $b_{n-k+1}=k-1$,
\item
$b_{2}\leq \lceil\frac{b_1+b_3}{2}\rceil$, $b_{3}\leq \lceil\frac{b_2+b_4}{2}\rceil$, \ldots, $b_{n-k}\leq \lceil\frac{b_{n-k-1}+b_{n-k+1}}{2}\rceil$,
\item
$a_{2}\leq \lceil\frac{a_1+a_3}{2}\rceil$, $a_{3}\leq \lceil\frac{a_2+a_4}{2}\rceil$, \ldots, $a_{k-1}\leq \lceil\frac{a_{k-2}+a_{k}}{2}\rceil$.
\end{enumerate}
\end{Theorem}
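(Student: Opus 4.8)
The plan is to extract items (1)--(2) from the $\m$-fullness of $I$ and items (3)--(4) from Proposition~\ref{apr14-13}, checking that the hypotheses needed by the latter are exactly what the former provides. Since $I$ is normal it is in particular $\m$-full---the passage recorded in the proof of Corollary~\ref{corq} via \cite[Theorem~14.1.8]{HuSw}---so Theorem~\ref{p-w-v} furnishes an integer $k$, $1\le k\le n$, with $b_{n-i}-b_{n-i+1}=1$ for $1\le i\le k-1$ and $a_i-a_{i+1}=1$ for $k\le i\le n-1$. First I would telescope these two unit-step conditions using the normalizations $a_n=0$ and $b_n=0$: the first chain gives $b_{n-1}=1,\,b_{n-2}=2,\ldots,b_{n-k+1}=k-1$, which is item (2), and the second gives $a_{n-1}=1,\,a_{n-2}=2,\ldots,a_k=n-k$, which is item (1). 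This fixes the value of $k$ once and for all.

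Next I would feed this stair structure into Proposition~\ref{apr14-13}. For item (4) I would run Proposition~\ref{apr14-13}(a): for each $i$ with $1\le i\le k-2$, both $b_{n-i}-b_{n-i+1}=1$ and $b_{n-i-1}-b_{n-i}=1$ hold (the latter being the step condition at index $i+1$, which lies in the admissible range $1\le i+1\le k-1$), so, using $I=\overline I$, the proposition yields $a_{i+1}\le\lceil(a_i+a_{i+2})/2\rceil$; reindexing by $j=i+1$ turns this into $a_j\le\lceil(a_{j-1}+a_{j+1})/2\rceil$ for $2\le j\le k-1$, which is item (4). For item (3) I would run Proposition~\ref{apr14-13}(b) symmetrically: for each $i$ with $k\le i\le n-2$, both $a_i-a_{i+1}=1$ and $a_{i+1}-a_{i+2}=1$ hold, so the proposition gives $b_{n-i}\le\lceil(b_{n-i-1}+b_{n-i+1})/2\rceil$, and the substitution $m=n-i$ (which ranges over $2\le m\le n-k$) rewrites this as $b_m\le\lceil(b_{m-1}+b_{m+1})/2\rceil$, i.e.\ item (3).

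There is no serious obstacle here---the mathematical weight sits entirely in Theorem~\ref{p-w-v} and Proposition~\ref{apr14-13}. The one point demanding care is the index bookkeeping: each application of Proposition~\ref{apr14-13} requires the relevant unit-step condition to hold at \emph{two} consecutive indices simultaneously, so I must check that the overlap stays inside the range supplied by Theorem~\ref{p-w-v}; this is what pins the upper limits of (3) and (4) at $n-k$ and $k-1$ rather than $n-k+1$ and $k$. Finally I would verify the degenerate cases $k=1$ (where $I$ is $x$-tight, items (2) and (4) are empty, and (3) runs over $2\le m\le n-1$) and $k=n$ (where $I$ is $y$-tight, items (1) and (3) are empty, and (4) runs over $2\le j\le n-1$) to confirm that the asserted families of inequalities reduce consistently to the empty set where expected.
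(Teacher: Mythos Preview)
Your proof is correct and follows essentially the same route as the paper's: both derive the index $k$ and items (1)--(2) from $\m$-fullness and then read off items (3)--(4) from Proposition~\ref{apr14-13}. The only cosmetic difference is that the paper phrases the existence of $k$ in terms of the $b$-stairs and then invokes Corollary~\ref{corq}(\ref{q2}) to obtain the $a$-stairs, whereas you pull both directly from Theorem~\ref{p-w-v}; your more explicit index bookkeeping is a welcome addition.
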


\begin{proof} There is $1\leq k\leq n$ such that
$b_{n-1}=1$, $b_{n-2}=2$,\ldots, $b_{n-k+1}=k-1$ and
$b_{n-k}-b_{n-k+1}\geq 2$. Then, using Corollary~\ref{corq}(\ref{q2}), it is
seen that $a_i-a_{i+1}=1$ for $i\geq k$. Hence (1) and (2) hold. Parts
 (3) and (4) follow from Proposition~\ref{apr14-13}.
\end{proof}

\begin{Example}\label{counter-example}\rm The ideal
$I=(x^3,\,x^2y^8,\, xy^{15},\, y^{21})$ is not normal
 (but it is $\m$-full)
and satisfies the conditions of
Theorem~\ref{dim2eff}. The integral closure of $I$ is
$\overline{I}=(x^3,\,x^2y^7,\, xy^{14},\, y^{21})$.
\end{Example}

We next state sufficient conditions of similar nature for normality.

\begin{Proposition} \label{dim2eff2} Let $I\subset K[x,y]$ be an ideal minimally generated by $n$
monomials that are listed lexicographically,
$I=(x^{a_1}, x^{a_2}y^{b_{n-1}},\ldots,x^{a_i}y^{b_{n-i+1}}, \ldots,
x^{a_{n-1}}y^{b_2}, y^{b_1})$.
If $a_i-a_{i+1}=1$ for $i=1,\ldots,n-1$ and $2b_{n-i}\leq b_{n-i-1}+b_{n-i+1}$
for all $i$, then $I$ is normal.
\end{Proposition}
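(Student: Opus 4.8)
The plan is to establish the stronger statement $\overline{I^t}=I^t$ for every $t\ge 1$, which is normality, using the Newton polyhedron description of the integral closure of powers in Proposition~\ref{may29-05}(1). Since $a_i-a_{i+1}=1$ for all $i$ and $a_n=0$, we have $a_i=n-i$, so the exponent vectors of the minimal generators are the points $u_p:=(p,b_{p+1})$ for $p=0,\dots,n-1$ (that is, $u_p=P_{n-p}$, with $u_0=(0,b_1)$ and $u_{n-1}=(n-1,0)$). Writing $d_j:=b_j-b_{j+1}>0$, the hypothesis $2b_{n-i}\le b_{n-i-1}+b_{n-i+1}$ is precisely $d_1\ge d_2\ge\cdots\ge d_{n-1}$, i.e.\ the map $p\mapsto b_{p+1}$ is convex. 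Geometrically this means the $u_p$ are in convex position and form the lower-left boundary of $\mathcal Q=\mathbb R^2_{\ge 0}+\mathrm{conv}(u_0,\dots,u_{n-1})$.

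First I would record the facet description of $\mathcal Q$. Convexity makes each segment $[u_p,u_{p+1}]$ a bounded edge, with inner normal $(d_{p+1},1)$, so that $\mathcal Q=\{x_1\ge 0,\ x_2\ge 0\}\cap\bigcap_{p=0}^{n-2}\{d_{p+1}x_1+x_2\ge d_{p+1}p+b_{p+1}\}$. Equivalently $\mathcal Q=\{(x_1,x_2):x_1\ge 0,\ x_2\ge\phi(x_1)\}$, where $\phi$ is the piecewise-linear convex function with $\phi(p)=b_{p+1}$ for $0\le p\le n-1$ and $\phi(x_1)=0$ for $x_1\ge n-1$; by homogeneity the lower boundary of $t\mathcal Q$ is $x_1\mapsto t\,\phi(x_1/t)$. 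Hence a lattice point $(c,e)$ lies in $t\mathcal Q$ exactly when $c\ge 0$ and $e\ge t\,\phi(c/t)$, and by Proposition~\ref{may29-05}(1) it suffices to show that every such $(c,e)$ dominates coordinatewise the exponent $\sum_{j=1}^t u_{s_j}$ of some product of $t$ generators; then $x^cy^e\in I^t$, giving $\overline{I^t}\subseteq I^t$, and the reverse inclusion is automatic.

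The packing is explicit. If $c\ge t(n-1)$, take all $t$ generators equal to $u_{n-1}=(n-1,0)$. Otherwise put $p:=\lfloor c/t\rfloor\le n-2$ and $\alpha:=c-tp\in\{0,\dots,t-1\}$, and use $\alpha$ copies of $u_{p+1}$ together with $t-\alpha$ copies of $u_p$: their $x$-coordinates sum to $tp+\alpha=c$, while their $y$-coordinates sum to $(t-\alpha)b_{p+1}+\alpha b_{p+2}$, which equals $t\,\phi(c/t)$ because $\phi$ is affine on $[p,p+1]$ and $c/t\in[p,p+1]$, hence is $\le e$. I expect the only genuine subtlety to be the passage in the second paragraph: it is exactly the convexity hypothesis that puts every $u_p$ on the lower boundary of $\mathcal Q$, so that $\phi$ is the honest interpolation through the values $b_{p+1}$; were some $u_p$ to lie strictly above $\phi$ (as happens in Example~\ref{counter-example}), the two-generator packing would overshoot in the $y$-coordinate and the argument would break. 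Granting that, the integrality of the packing is free, precisely because a minimal generator sits at every integer abscissa $0,1,\dots,n-1$. Taking $t=1$ recovers $\overline I=I$, in line with the theme of complete ideals.
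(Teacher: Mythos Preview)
Your proof is correct and follows a genuinely different route from the paper's. The paper works only with $t=1$: it picks a minimal generator $x^{c_1}y^{c_2}$ of $\overline{I}$, writes $(c_1,c_2)$ via Proposition~\ref{may29-05}(\ref{ii}) as a convex combination of the $P_i$ plus a perturbation in $[0,1)^2$, and then pushes through a chain of algebraic inequalities (your convexity hypothesis enters at the final step, Eq.~(2.9)) to show $c_2\ge b_{n-i+1}$ for the unique $i$ with $c_1=a_i$. This yields $\overline{I}=I$; normality of all powers then rests implicitly on Zariski's theorem that products of complete ideals in a two-dimensional regular ring are complete.

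Your argument is more geometric and more self-contained: you identify the convexity hypothesis with the statement that the map $p\mapsto b_{p+1}$ is convex, which makes the lower boundary of $\mathcal{Q}$ exactly the graph of the piecewise-linear interpolant $\phi$, and then you exploit that a generator sits at \emph{every} integer abscissa to give, for each $t$ and each lattice point of $t\mathcal{Q}$, an explicit product of $t$ generators lying coordinatewise below it. This proves $\overline{I^t}=I^t$ for all $t$ directly, without invoking Zariski. The paper's approach has the virtue of illustrating the general $[0,1)^d$ machinery of Proposition~\ref{may29-05}(\ref{ii}); yours has the virtue of making the role of $x$-tightness (one generator per abscissa) and of the convexity of the $b$-sequence completely transparent, and of being self-contained for normality.
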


\begin{proof} Notice that $a_i=n-i$ for $i=1,\ldots,n$. Let
$x^{c_1}y^{c_2}$ be a minimal monomial generator of $\overline{I}$. By
Proposition~\ref{may29-05}(\ref{ii}) we can write
\begin{eqnarray}
c_1&=&
\lambda_1(n-1)+
\lambda_2(n-2)+\cdots+\lambda_i(n-i)+\cdots+\lambda_{n-2}(2)+\lambda_{n-1}(1)+\epsilon_1,
\label{may23-13-2}\\
c_2&=&
\lambda_2b_{n-1}+\cdots+\lambda_{i-1}b_{n-i+2}+\lambda_ib_{n-i+1}+
\lambda_{i+1}b_{n-i}+\cdots+\lambda_{n-1}b_2+\lambda_nb_1+\epsilon_2,\label{may22-13}
\end{eqnarray}
where $\epsilon_1,\epsilon_2$ are in $[0,1)$, $\lambda_i\geq 0$ for
all $i$ and $\sum_{i=1}^n\lambda_i=1$. Hence $0\leq c_1<n$. As $c_1$ is an
integer, one has $0\leq c_1\leq n-1$. Thus, $c_1=a_i=n-i$ for
some $1\leq i\leq n$. To show that $x^{c_1}y^{c_2}$ is in $I$ it
suffices to show that $x^{c_1}y^{c_2}$ is a multiple of
$x^{a_i}y^{b_{n-i+1}}$. The proof reduces to showing that $c_2\geq
b_{n-i+1}$.  Thus, by Eq.~(\ref{may22-13}), we need only show the
following inequality
\begin{eqnarray}
& &\lambda_2b_{n-1}+\cdots+\lambda_{i-1}b_{n-i+2}+
\lambda_{i+1}b_{n-i}+\cdots+\lambda_{n-1}b_2+\lambda_nb_1\geq
(1-\lambda_i)b_{n-i+1}.\label{may23-13}
\end{eqnarray}
Using $1-\lambda_i=\sum_{j\neq i}\lambda_j$, it follows that this inequality is
equivalent to
\begin{eqnarray}
& &\lambda_{i+1}(b_{n-i}-b_{n-i+1})+\lambda_{i+2}(b_{n-i-1}-b_{n-i+1})+\cdots+\lambda_{n}(b_1-b_{n-i+1})
\geq\label{may23-13-1}\\
& &\ \ \  \ \ \ \ \ \ \ \  \ \ \ \ \
\lambda_1b_{n-i+1}+\lambda_2(b_{n-i+1}-b_{n-1})+\cdots
+\lambda_{i-1}(b_{n-i+1}-b_{n-i+2}).\nonumber
\end{eqnarray}
From Eq.~(\ref{may23-13-2}) and using the equality
$n-i=(n-i)\sum_{j=1}^n\lambda_j$ one has
\begin{equation}\label{may23-13-3}
\lambda_{i+1}(1)\geq\lambda_1(i-1)+\cdots+\lambda_{i-1}(1)-
[\lambda_{i+2}(2)+\cdots+\lambda_{n-1}(n-i-1)+\lambda_n(n-i)].
\end{equation}
Hence to show Eq.~(\ref{may23-13-1}) it suffices to prove the
following inequality
\begin{eqnarray}
&
&\lambda_{1}[(i-1)(b_{n-i}-b_{n-i+1})-b_{n-i+1}]+\cdots+\lambda_{i-1}[(b_{n-i}-b_{n-i+1})-
(b_{n-i+1}-b_{n-i+2})]+\label{may23-13-4}\\
& &\  \ \ \ \ \
\lambda_{i+2}[(b_{n-i-1}-b_{n-i+1})-2]+\cdots+\lambda_n[(b_1-b_{n-i+1})-(n-i)]\geq
0.\nonumber
\end{eqnarray}
To complete the proof notice that this inequality holds because all
coefficients of $\lambda_1,\ldots,\lambda_n$ are non-negative.
\end{proof}

\begin{Example}\rm Let $I$ be the ideal of $\mathbb{Q}[x,y]$
generated by $x^2,xy^2,y^3$. This ideal is normal,
satisfies $a_i-a_{i+1}=1$ for $i=1,2$ but $2b_{2}\not\leq
b_{1}+b_{3}$, where $b_1=3$, $b_2=2$ and $b_3=0$.
\end{Example}

\begin{Corollary}\label{suffcond4normality}
Let $I$ be minimally generated by $n$ monomials that are listed lexicographically,
$I=(x^{a_1}, x^{a_2}y^{b_{n-1}},\ldots,x^{a_i}y^{b_{n-i+1}}, \ldots,
x^{a_{n-1}}y^{b_2}, y^{b_1})$. Assume that $I$ is $\m$-full and let $k$ be the integer obtained in Theorem \ref{p-w-v}, $1\leq k\leq n$.
If
\begin{enumerate}
\item
$2b_{2}\leq b_1+b_3$, $2b_{3}\leq b_2+b_4$, \ldots, $2b_{n-k}\leq b_{n-k-1}+b_{n-k+1}$,
\item
$2a_{2}\leq a_1+a_3$, $2a_{3}\leq a_2+a_4$, \ldots, $2a_{k-1}\leq a_{k-2}+a_{k}$
\end{enumerate}
then $I$ is normal.
\end{Corollary}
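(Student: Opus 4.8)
The plan is to reduce the statement to the already-established sufficient condition of Proposition~\ref{dim2eff2} by splitting $I$ through the factorization of Corollary~\ref{mfull_tight}, and then to reassemble the two factors using Zariski's theorem that in a two-dimensional regular ring the product of complete ideals is again complete (see \cite[Ch.~14]{HuSw}). First I would invoke Corollary~\ref{mfull_tight} to write $I = X\cdot Y$ with $X$ an $x$-tight ideal and $Y$ a $y$-tight ideal. The integer $k$ of Theorem~\ref{p-w-v} marks exactly the place on the staircase of $I$ where the unit vertical steps near the $x$-axis (the points $P_1,\dots,P_k$) give way to the unit horizontal steps near the $y$-axis (the points $P_k,\dots,P_n$); I expect $Y$ to produce the former and $X$ the latter.

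The crux is to identify hypotheses (1) and (2) with the hypotheses of Proposition~\ref{dim2eff2} for the factors. Since $\overline{I}=\overline{X}\cdot\overline{Y}$, the Newton polyhedron of $I$ is the Minkowski sum of those of $X$ and $Y$, so its lower-left boundary is obtained by concatenating the edges of the two summands in order of slope. Every edge of the $x$-tight factor $X$ has slope at most $-1$ and every edge of the $y$-tight factor $Y$ has slope at least $-1$; hence the shallow part of the boundary of $I$ (carrying $P_1,\dots,P_k$) is a translate of the boundary of $Y$, and the steep part (carrying $P_k,\dots,P_n$) is a translate of the boundary of $X$. In particular the head $x$-coordinates $a_1,\dots,a_k$ differ from the $x$-exponents of $Y$ by a constant, and the tail $y$-coordinates $b_1,\dots,b_{n-k+1}$ differ from the $y$-exponents of $X$ by a constant. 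Because the inequalities $2a_j\leq a_{j-1}+a_{j+1}$ and $2b_j\leq b_{j-1}+b_{j+1}$ are unchanged when a constant is added to a sequence, condition (2) says precisely that the $x$-exponent sequence of $Y$ is convex and condition (1) says precisely that the $y$-exponent sequence of $X$ is convex.

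Granting this dictionary, Proposition~\ref{dim2eff2} applied to $X$, and its $x\leftrightarrow y$ symmetric form applied to $Y$, show that $X$ and $Y$ are both normal, so every power of each is complete. Finally, for each $m$ one has $I^m=X^m Y^m$, a product of two complete ideals, which is complete by Zariski's product theorem; thus $\overline{I^m}=I^m$ for all $m$ and $I$ is normal. The step I expect to require the most care is the middle paragraph: verifying rigorously that the factorization of Corollary~\ref{mfull_tight} splits the staircase of $I$ into translates of the boundaries of $X$ and $Y$, and checking that the index ranges in (1) and (2) correspond exactly to the interior vertices of $Y$ and $X$ — including the shared corner $P_k$ and the degenerate values $k=1$ and $k=n$, where one factor becomes a pure power of a single variable and the corresponding condition is vacuous.
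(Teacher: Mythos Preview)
Your proposal is correct and follows essentially the same route as the paper: factor $I=XY$ via Corollary~\ref{mfull_tight}, apply Proposition~\ref{dim2eff2} (and its $x\leftrightarrow y$ analogue) to the two factors, and then reassemble. The only cosmetic difference is that for the reassembly step the paper cites \cite[Proposition~2.6]{crispin-quinonez} (the product of an $x$-tight and a $y$-tight ideal is integrally closed iff both factors are), whereas you invoke Zariski's product theorem directly and spell out the Minkowski-sum dictionary between the staircase of $I$ and those of $X$ and $Y$, which the paper leaves implicit.
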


\begin{proof}
As observed in Corollary~\ref{mfull_tight}, an $\m$-full ideal $I$ is the product of an $x$-tight ideal $X$
and a $y$-tight ideal $Y$.
Moreover, by \cite[Proposition 2.6]{crispin-quinonez}, the product of an $x$-tight ideal and a $y$-tight ideal
is integrally closed if and only if both ideals are integrally closed. One can apply Proposition \ref{dim2eff2}
to $X$, and the similar
result holding for $y$-tight ideals to $Y$ to get the required result.
\end{proof}

\begin{Remark}{\rm
For an $\m$-full ideal, normality is a condition in between the set of conditions (3)-(4) in Theorem~\ref{dim2eff},
and the set of conditions (1)-(2) in Corollary \ref{suffcond4normality}.
}\end{Remark}

\subsubsection*{Related questions}

\begin{Question}\label{finiteness}\rm (Finiteness Question)
Each of the necessary and sufficient conditions of normality above is cast in the form of
a system $Q$ of  linear inequalities on the coordinates of the points $P_i$. It is not likely that
a full set of conditions can be expressed by a finite set $Q_1, \ldots, Q_m$ of inequalities.
More precisely for each type of such inequality $Q$ denote by $M(Q)$ the set of all monomial ideals that
satisfies $Q$.
For instance, for the normal ideals $I$ lying in the {\it variety}  $M(Q_i)$, then for all
pairs of integers $a,b\geq 1$, the ideal $(x^a, y)(x,y^b)I$ is also normal, by Zariski's theorem,
so it must belong to one of the other
 varieties $M(Q_j)$.
\end{Question}

 \begin{Question}\rm (Realization Question)
  Let $I$ be an $\m$-full ideal minimally generated by
$n$ elements and $\bar{I}$ its integral closure. Since $\bar{I}$ is also minimally generated by $n$ elements,
there is at least one map $\varphi$ between the set of points
$\{P_1, \ldots, P_n\}$ of $I$ and
$\{P_1', \ldots, P_n'\}$ of $\bar{I}$ given by $P_i = P_j' + R_{ij}$, for each $i$ and some $j$.
 Note that $R_{1j} = R_{nj} = (0,0)$.
We ask what is the nature of such maps? Is there more than one such mapping? A positive answer would help
in predicting the integral closure of a monomial ideal by first determining its $\m$-full closure.
\end{Question}

\section{Rees algebras}

 Let $I$ be a monomial ideal of $\RR=k[x,y]$. We now study the Rees algebras $\RR[It]$ emphasizing
 when they are Cohen-Macaulay and obtaining their defining equations.

\subsection{Syzygies}
We have the following facts about their syzygies.

\medskip

 {\bf [Matrix of syzygies]}:
Let $I$ be an ideal minimally generated by $n$
monomials that are listed lexicographically,
$I=(x^{a_1}, x^{a_2}y^{b_{n-1}}, \ldots,x^{a_i}y^{b_{n-i+1}},\ldots, x^{a_{n-1}}y^{b_2}, y^{b_1})$.
Among the Taylor syzygies, a subset of  $n-1$ ``consecutive'' ones minimally generate,
giving rise to the $n \times (n-1)$ syzygy matrix
\[ \varphi = \left[ \begin{array}{rrcrr}
y^{b_{n-1}} & 0 & \cdots & 0 & 0 \\
-x^{a_1-a_2} & y^{b_{n-2}-b_{n-1}} &\cdots  & 0 & 0\\
0 & -x^{a_2-a_3} & \cdots & 0 & 0  \\
\vdots & \vdots & \cdots & \vdots & \vdots \\
0 & 0 & \cdots & y^{b_{2}-b_{3}} & 0\\
0 & 0 & \cdots & -x^{a_{n-2}-a_{n-1}} &
 y^{b_{1}-b_{2}}   \\
0 & 0 & \cdots  &0 & -x^{a_{n-1}}  \\
\end{array}
\right].
\]

Note that $\varphi$ is monomial (this is not typical of monomial
ideals in higher dimension, it is even an
issue of which Cohen-Macaulay monomial ideals of codimension two have
a minimal presentation with monomial
entries). In particular we have:

\begin{Proposition} \label{syzygies}
Let $I$ be a codimension two monomial ideal of $\RR = k[x,y]$. The content ideal
of the syzygies of $I$ is $I_1(\varphi) = (x^r,y^s)$.
\end{Proposition}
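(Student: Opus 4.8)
The plan is to read the claim off directly from the explicit band structure of the presentation matrix $\varphi$; the entire substance of the proposition is already contained in the assertion, recorded just before the statement, that $\varphi$ is a monomial matrix.

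First I would record that every nonzero entry of $\varphi$ is, up to sign, a pure power of a single variable. The entries on the main diagonal are the $y$-powers $y^{b_{n-1}}, y^{b_{n-2}-b_{n-1}}, \ldots, y^{b_2-b_3}, y^{b_1-b_2}$, and those on the subdiagonal are the $x$-powers $x^{a_1-a_2}, x^{a_2-a_3}, \ldots, x^{a_{n-2}-a_{n-1}}, x^{a_{n-1}}$. Using the standing conventions $a_n=b_n=0$, these are precisely the monomials
\[
y^{\,b_k-b_{k+1}} \ (1\le k\le n-1) \qquad\text{and}\qquad x^{\,a_i-a_{i+1}} \ (1\le i\le n-1).
\]
Because the two exponent sequences satisfy $a_1>\cdots>a_{n-1}>a_n=0$ and $b_1>\cdots>b_{n-1}>b_n=0$, each exponent $a_i-a_{i+1}$ and $b_k-b_{k+1}$ is a strictly positive integer; in particular no entry is a unit or vanishes, so the entries are exactly the monomials just listed.

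Next, since $I_1(\varphi)$ is by definition the ideal generated by all the entries of $\varphi$, I obtain
\[
I_1(\varphi)=\left(x^{a_1-a_2},\ldots,x^{a_{n-1}},\ y^{b_{n-1}},\ldots,y^{b_1-b_2}\right).
\]
I would then invoke the elementary fact that an ideal generated by finitely many positive powers of one variable is generated by the least of those powers, i.e. $(x^{c_1},\ldots,x^{c_m})=(x^{\min_j c_j})$, and the same for $y$. Applying this separately to the $x$-part and the $y$-part collapses the generating set to two elements and yields $I_1(\varphi)=(x^r,y^s)$ with
\[
r=\min_{1\le i\le n-1}(a_i-a_{i+1}),\qquad s=\min_{1\le k\le n-1}(b_k-b_{k+1}),
\]
which is the claim.

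I do not expect a genuine obstacle: once the band shape of $\varphi$ is written out, the result is a one-line consequence of divisibility among powers of a single variable. The only points demanding care are purely clerical, namely folding the two corner entries $y^{b_{n-1}}$ and $x^{a_{n-1}}$ into the uniform description through the conventions $a_n=b_n=0$, and checking that all relevant exponents are strictly positive, so that the minima defining $r$ and $s$ are attained by honest generators among the entries.
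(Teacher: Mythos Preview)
Your proposal is correct and follows exactly the approach the paper intends: the proposition is stated immediately after the explicit display of the band matrix $\varphi$ with the lead-in ``In particular we have,'' and no separate proof is given there because the claim is meant to be read off from that display. Your write-up just makes this reading-off explicit, and the identification $r=\min_i(a_i-a_{i+1})$, $s=\min_k(b_k-b_{k+1})$ is a harmless sharpening of what the paper leaves implicit.
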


A similar assertion holds  for an $\m$-primary $\m$-full ideal of a two dimensional regular local ring $(\RR,\m)$
 (of infinite residue field): $I_1(\varphi) = (x,f)$, $x\in \m \setminus \m^2$.

\subsection{Equations of the Rees algebra} Let $I$ be an ideal of $\RR$ minimally generated
by $n$ monomials.
 Let $\BB=\RR[\TT_1, \ldots, \TT_n] \rar \RR[It]$ be an $R$-algebra presentation of the Rees
algebra of $I$, and set $Q$ to be the kernel.  $Q$ is a graded prime ideal in the standard $R$-grading of $\BB$, $Q = Q_1 + Q_2 + \cdots$. With
the  syzygies defining $Q_1$, we focus on $Q_2$.

\medskip
\begin{itemize}

\item  {\bf [Elimination]}: Write the set $Q_1$ of syzygies of $I$ as
\[ Q_1=[\TT_1, \ldots, \TT_n] \cdot \varphi = \TT \cdot \varphi,\]
which we rewrite as
\[ \TT \cdot \varphi = I_1(\varphi)\cdot \BB(\varphi),\]
($\BB(\varphi)$ is called the Jacobian dual of $\varphi$)
where $I_1(\varphi)$ is represented as $[x^r,y^s]$.  By elimination
\[ I_2(\BB(\varphi))\subset Q_2\subset Q.\]

\medskip

\item {\bf [Expected equations]:} $I$ is said to have the expected equations if $Q=
(Q_1, I_2(\BB(\varphi)) )$.
Our setting is now ready for several applications of \cite{MU}.
See also \cite[Theorem 3.17]{Conca05} where a similar development takes place.

\end{itemize}

We will make use of the following criterium of Cohen-Macaulayness of Rees algebras.

\begin{Proposition}[{\bf Cohen-Macaulay Test}] \label{ReesCMest}
 Let $(\RR,\m)$ be a Cohen-Macaulay local ring of dimension two and
infinite  residue field. If $I$ is an $\m$-primary ideal, then $\RR[It]$ is Cohen-Macaulay if and only if
the reduction number of $I$ is at most $1$.
\end{Proposition}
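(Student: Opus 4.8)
The plan is to route the statement through the associated graded ring $G=\mathrm{gr}_I(\RR)=\bigoplus_{n\ge 0} I^n/I^{n+1}$ and the characterization, due to Goto--Shimoda and Trung--Ikeda, of Cohen--Macaulayness of Rees algebras. Since the residue field is infinite and $\dim\RR=2$, the ideal $I$ admits a minimal reduction $J=(x,y)$, and because $\RR$ is Cohen--Macaulay the system of parameters $x,y$ is a regular sequence. By definition $r(I)\le 1$ precisely when some such $J$ satisfies $I^2=JI$ (equivalently $r_J(I)\le 1$), and it is this reformulation that I would work with throughout.

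The two inputs that carry the argument are the following. First, the theorem of Trung--Ikeda (available because $\dim\RR=2\ge 2$): $\RR[It]$ is Cohen--Macaulay if and only if $G$ is Cohen--Macaulay and $a(G)<0$, where $a(G)$ is the $a$-invariant of $G$. Second, the computation of $a(G)$ in terms of the reduction number when $G$ is Cohen--Macaulay: the images $x^{*},y^{*}\in G_1$ then form a homogeneous regular sequence of degree-one elements, and killing them collapses $G$ to the Artinian graded ring $\bar G=\bigoplus_n I^n/(JI^{n-1}+I^{n+1})$, whose top nonvanishing degree is exactly $r_J(I)$. Since the $a$-invariant increases by $\dim G=2$ upon passing to $\bar G$ along such a regular sequence, this gives $a(G)=r_J(I)-2$, so that $a(G)<0$ is equivalent to $r_J(I)\le 1$.

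With these in hand both directions are short. If $\RR[It]$ is Cohen--Macaulay, Trung--Ikeda gives $G$ Cohen--Macaulay and $a(G)<0$, whence $r(I)\le r_J(I)=a(G)+2\le 1$. Conversely, suppose $I^2=JI$. Then $I^n=J^{n-1}I\subseteq J$ for every $n\ge 2$, so the Valabrega--Valla conditions $J\cap I^n=JI^{n-1}$ hold trivially (for $n\ge 2$ both sides equal $I^n$, and the case $n=1$ is immediate); this means $x^{*},y^{*}$ is a $G$-regular sequence and hence $G$ is Cohen--Macaulay. The $a$-invariant formula then yields $a(G)=r_J(I)-2\le -1<0$, and Trung--Ikeda returns the Cohen--Macaulayness of $\RR[It]$.

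The one genuinely deep ingredient is the Trung--Ikeda equivalence, and I expect that to be the main obstacle; everything else is the Valabrega--Valla criterion together with the standard behaviour of the $a$-invariant under a degree-one regular sequence. As an alternative for the harder implication, one could argue directly that $I^2=JI$ makes $\RR[It]$ a maximal Cohen--Macaulay module over the complete intersection Rees algebra $\RR[Jt]\cong\RR[U,V]/(yU-xV)$ — equivalently, that the Sally module $S_J(I)=\bigoplus_{n\ge 1}I^{n+1}/J^nI$ vanishes — thereby bypassing the $a$-invariant machinery; but the graded-ring route above is the most economical to state.
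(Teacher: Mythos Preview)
Your proof is correct but takes a different route from the paper's. The paper handles the two implications separately with distinct tools: for the forward direction it simply invokes the Goto--Shimoda theorem in dimension two, and for the converse it argues directly that $I^2=JI$ forces $I\RR[It]=I\RR[Jt]$, observes this is a maximal Cohen--Macaulay module over $\RR[It]$, and then cites \cite[p.~102]{icbook} to conclude. Your argument is more uniform: you pass to the associated graded ring $G$, use the Trung--Ikeda equivalence ($\RR[It]$ Cohen--Macaulay $\Leftrightarrow$ $G$ Cohen--Macaulay and $a(G)<0$), and reduce everything to the identity $a(G)=r_J(I)-2$ obtained via Valabrega--Valla. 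This buys a single conceptual thread through both directions and makes the role of the reduction number transparent, at the cost of invoking the $a$-invariant formalism and a somewhat heavier theorem. Amusingly, the alternative you sketch at the end---vanishing of the Sally module and maximal Cohen--Macaulayness over $\RR[Jt]$---is essentially the paper's own converse argument (phrased in terms of $I\RR[It]$ rather than $\RR[It]$ itself), so you have in fact anticipated their approach as well.
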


\begin{proof}
The forward assertion is a consequence of the Goto-Shimoda theorem (\cite{GS82}) for rings of dimension two.
For the converse, if $J$ is a minimal reduction and $I^2 = JI$, $I\RR[It] = I\RR[Jt]$, from which it follows
that $I\RR[It]$ is a maximal Cohen-Macaulay module over $\RR[It]$. The Cohen-Macaulayness of $\RR[It]$ follows
from this (see \cite[p. 102]{icbook}).
\end{proof}

\begin{Theorem} \label{Expeq}
  The  Rees algebra of a complete ideal $I$ of $\RR$ is always Cohen-Macaulay. In particular,
$I$ has reduction number $\leq 1$.
 $I$ has the expected equations if and only if
 \[ I_{n-2}(\varphi) = I_1(\varphi)^{n-2}.\]
\end{Theorem}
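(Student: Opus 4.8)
The plan is to dispatch the first two assertions by citation and to concentrate on the equivalence. For the Cohen--Macaulayness, I would argue that since $I$ is complete, Zariski's theorem that products of complete ideals in a two-dimensional regular local ring are again complete forces every power $I^m$ to be complete; hence $I$ is normal and $\RR[It]$ is a normal domain. Localizing at $\m$ (off $\m$ the ideal is trivial and there is nothing to check), \cite[Corollary~5.4]{LT} applies and yields that $\RR[It]$ is Cohen--Macaulay. The reduction number bound is then immediate from the Cohen--Macaulay Test (Proposition~\ref{ReesCMest}) in the regular local ring $\RR_\m$, which equates Cohen--Macaulayness of $\RR[It]$ with reduction number at most one.

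For the equivalence I would first make the Jacobian dual explicit. As $I$ is complete it is $\m$-full (\cite[Theorem~14.1.8]{HuSw}), so its syzygy matrix is the bidiagonal monomial matrix $\varphi$ above and $I_1(\varphi)=(x^r,y^s)$ (Proposition~\ref{syzygies}). Writing $\TT\cdot\varphi=[x^r,y^s]\cdot\BB(\varphi)$ exhibits the $2\times(n-1)$ Jacobian dual with linear entries, and $I_2(\BB(\varphi))\subseteq Q_2$ by elimination. I would then record one inclusion for free: every entry of $\varphi$ lies in $I_1(\varphi)$, so each $(n-2)$-minor of $\varphi$ lies in $I_1(\varphi)^{n-2}$, giving $I_{n-2}(\varphi)\subseteq I_1(\varphi)^{n-2}$ unconditionally. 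Consequently the asserted equality is equivalent to the single reverse containment $I_1(\varphi)^{n-2}\subseteq I_{n-2}(\varphi)$, i.e.\ to the statement that each monomial $x^{ri}y^{s(n-2-i)}$, $0\le i\le n-2$, occurs among the $(n-2)\times(n-2)$ minors of the bidiagonal matrix.

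The equivalence itself I would derive from \cite[Theorem~1.2]{MU}. Because $I$ is a perfect ideal of codimension two whose Rees algebra is Cohen--Macaulay (equivalently, reduction number at most one), the depth and residual hypotheses of that theorem are met in $\RR_\m$, and its criterion for the defining ideal $Q$ to be generated by $Q_1$ together with $I_2(\BB(\varphi))$ --- that is, for one round of elimination to suffice --- specializes in our situation to the equality $I_{n-2}(\varphi)=I_1(\varphi)^{n-2}$. The main obstacle is precisely this specialization: one must match the abstract Fitting/residual condition of Morey--Ulrich to the present $\m$-primary, codimension-two setting and check that it becomes the grade condition on $I_2(\BB(\varphi))$ equivalent, by the reduction of the previous paragraph, to the reverse containment $I_1(\varphi)^{n-2}\subseteq I_{n-2}(\varphi)$. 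Granting the structural input of Morey--Ulrich, the remaining verification is the finite, if delicate, combinatorial computation of the $(n-2)$-minors of $\varphi$ in terms of the exponents $a_i,b_i$.
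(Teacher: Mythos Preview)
Your proposal is correct and follows the same route as the paper: the Cohen--Macaulayness is attributed to Lipman--Teissier \cite[Corollary~5.4]{LT}, the reduction number bound comes from Proposition~\ref{ReesCMest}, and the equivalence on expected equations is obtained by specializing \cite[Theorem~1.2]{MU} using the syzygy information recorded in Proposition~\ref{syzygies}. Your write-up supplies more of the connective tissue (Zariski's product theorem to pass from complete to normal, the automatic inclusion $I_{n-2}(\varphi)\subseteq I_1(\varphi)^{n-2}$, and an honest flag that matching the Morey--Ulrich hypotheses to this setting is the real work), but the argument is the same one the paper gives in two sentences.
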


The Cohen-Macaulayness is the result of Lipman--Teissier (\cite[Corollary 5.4]{LT}).
The last assertion follows from \cite[Theorem 1.2]{MU} and the observations above on the syzygies
of $I$.

\begin{Example}{\rm
 Suppose $I = (x, y^{b_1})\cdots (x, y^{b_{n-1}})$, $b_1 \leq \cdots \leq b_{n-1}$, $n\geq 4$.
Consider its matrix $\varphi$ of syzygies. Inspection gives: $I_1(\varphi) = (x, y^{b_1})$, while the required  equality
\[ I_{n-2}(\varphi) = I_1(\varphi)^{n-2},\] that is
 \[ (x^{n-2}, x^{n-3}y^{b_1}, x^{n-4}y^{b_1+b_2}, \ldots, y^{b_1+b_2 + \cdots + b_{n-1}})= (x, y^{b_1})^{n-2},\]
 means \[b_1  = b_2 =  \cdots = b_{n-1}\] and therefore $I = (x, y^{b_1})^{n-1}$.
}\end{Example}

 These observations mean that at least
 among standard ideals those with the expected equations are rare.
 If $I$ is normal but does not have the expected equations, where are the missing equations? A guess [to be proved below]  is that
they are quadratic, missing from $I_{2}(\BB(\varphi))$.
 Note that if $I$ has the expected equations,
 \[ Q = I_1(\varphi)\BB(\varphi): I_1(\varphi).\]
  Since the right-hand side is always contained in $Q$,  we now discuss the case of  equality.
 If $I$ has the expected equations, $K=\TT\cdot \varphi+I_2(\BB(\varphi))$ is a prime ideal of
$\RR[\TT]$
of height $n-1$.
We can rewrite $(K, (x,y))$ (an ideal of height $n$) as
\begin{eqnarray} \label{expeq}
 (K + (x,y))=(L,(x,y)),
\end{eqnarray}
where
$L$
is the ideal of $k[\TT]$ of the maximal minors of the $2\times (n-1)$ matrix $\BB_0(\varphi)$ obtained from $\BB(\varphi)$ by
reduction mod $(x,y)$. By the Eagon-Northcott formula,
 \[\height L\leq (n-1)-2+1 = n-2.\] The equality $\height L = n-2$ now follows from (\ref{expeq}).
 Thus $L$ is Cohen-Macaulay. We note that with
this we have that the regularity of $k[\TT]/L$ is $1$ since $\BB_0(\varphi)$ is a matrix with linear entries.

\begin{Theorem} Let $I$ be a  monomial ideal such that $\RR[It]$ is Cohen-Macaulay. Let $\varphi$ be the matrix of
syzygies of $I$ and $\BB_0(\varphi)$ the matrix of linear forms of $k[\TT]$ defined above.
 The following conditions are equivalent:
\begin{enumerate}
\item $I$ has the expected equations;
\item $\height I_2(\BB_0(\varphi))= n-2$.
\end{enumerate}

\end{Theorem}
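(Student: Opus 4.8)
The implication $(1)\Rightarrow(2)$ is already carried out in the discussion preceding the statement: if $I$ has the expected equations then $K:=\TT\cdot\varphi+I_2(\BB(\varphi))$ equals $Q$, and reducing modulo $(x,y)$ identifies $k[\TT]/L$, where $L:=I_2(\BB_0(\varphi))$, with the special fiber $\mathcal{F}(I)=\BB/(Q+(x,y))$, whose dimension is the analytic spread $\ell(I)=\dim\RR=2$; hence $\height L=n-2$. I therefore concentrate on $(2)\Rightarrow(1)$, where I must deduce $K=Q$ from $\height L=n-2$. Recall that $K\subseteq Q$, that $\height Q=n-1$, and that $\RR[It]=\BB/Q$ is a three-dimensional Cohen--Macaulay domain containing $\RR=k[x,y]$; moreover, $\height L=n-2$ being the maximal value allowed by the Eagon--Northcott bound for the maximal minors of the $2\times(n-1)$ matrix $\BB_0(\varphi)$, the Eagon--Northcott complex resolves $k[\TT]/L=\BB/(K+(x,y))$, which is thus Cohen--Macaulay of dimension $2$.

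First I would dispose of the generic behaviour over $\RR$, independently of hypothesis (2). Put $F=\mathrm{Frac}(\RR)$. The $n-1$ entries of $\TT\cdot\varphi$ are $F$-linearly independent linear forms in $\TT_1,\dots,\TT_n$ (the $j$-th involves only $\TT_j,\TT_{j+1}$, with unit coefficients), so they cut $F[\TT]$ down to a one-dimensional polynomial ring; hence $(\TT\cdot\varphi)F[\TT]$ is prime of height $n-1$ and therefore equals $QF[\TT]$. As $(\TT\cdot\varphi)F[\TT]\subseteq KF[\TT]\subseteq QF[\TT]$, we get $KF[\TT]=QF[\TT]$, i.e.\ $(Q/K)\otimes_\RR F=0$. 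Thus $Q/K$ is $\RR$-torsion, and from the exact sequence $0\to Q/K\to\BB/K\to\RR[It]\to 0$ one sees that $K=Q$ will follow as soon as $\BB/K$ is shown to be torsion-free over $\RR$.

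It remains to prove that no associated prime of $\BB/K$ meets $\RR\setminus\{0\}$, which I would argue in two steps. (i) $\BB/K$ has no embedded primes and has dimension $3$: this I would obtain by lifting the Cohen--Macaulayness of $k[\TT]/L=\BB/(K+(x,y))$ up to $\BB/K$ along the reduction $\BB\twoheadrightarrow k[\TT]$ via the acyclicity lemma, using that $\RR[It]$ is Cohen--Macaulay and that $x$ is a nonzerodivisor on the domain $\RR[It]$. (ii) No top-dimensional component of $V(K)$ lies over a proper closed subset of $\mathrm{Spec}\,\RR$: since $K$ is generated by binomials with pure monomial coefficients, $\BB/K$ is multigraded and the contractions of its relevant primes to $\RR$ are monomial, so such a component would force $x$ or $y$ into an associated prime; but the explicit monomial shape of $\TT\cdot\varphi$ and $\BB(\varphi)$, together with hypothesis (2), gives $\dim\BB/(K+(x))=\dim\BB/(K+(y))=2<3$, so $x$ and $y$ are nonzerodivisors on the now unmixed ring $\BB/K$. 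Combining (i) and (ii), every associated prime of $\BB/K$ contracts to $(0)$, so $\BB/K$ is $\RR$-torsion-free and hence $K=Q$.

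The crux is step (i). Reduction modulo $(x,y)$ does not commute with the constructions involved because $\RR\to\RR[It]$ is not flat, so the Cohen--Macaulayness of $\BB/K$ does not descend formally from that of $k[\TT]/L$; controlling the resulting depth behaviour is exactly where the hypothesis that $\RR[It]$ is Cohen--Macaulay is indispensable (through Proposition~\ref{ReesCMest} it also bounds the relation type of $Q$ by $2$, confining any missing equations to degree $2$). An alternative to this hand-built lifting is to read $(2)\Rightarrow(1)$ off the Jacobian-dual criterion of Morey--Ulrich \cite{MU}, whose hypothesis on the height of the fiber presentation is precisely condition (2); the main work then shifts to verifying that that criterion applies in the present $G_2$ setting.
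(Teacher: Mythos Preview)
Your skeleton mirrors the paper's: agree that $K\subseteq Q$ coincide after inverting a nonzero element of $\m$, then establish that $\BB/K$ is unmixed of height $n-1$, and deduce $K=Q$. The difference lies entirely in how unmixedness is obtained, and this is where your step~(i) has a genuine gap. Your proposed ``lift of Cohen--Macaulayness of $k[\TT]/L$ to $\BB/K$ via the acyclicity lemma, using that $\RR[It]$ is Cohen--Macaulay'' is not a recognizable argument: you have not named a complex over $\BB$ whose acyclicity would yield a resolution of $\BB/K$, and knowing that $\BB/Q$ is Cohen--Macaulay gives no a priori control on $\BB/K$ since the map $\BB/K\to\BB/Q$ has kernel $Q/K$, precisely the object you are trying to show vanishes. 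You yourself flag this as the crux, but the sketch as written does not close it.

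The paper's device at this exact point is to observe that $K$ is a residual intersection of a complete intersection. Since $\TT\cdot\varphi=[x^r,y^s]\cdot\BB(\varphi)$, the $n-1$ linear forms lie in the height-two complete intersection $I_1(\varphi)=(x^r,y^s)\subset\BB$, and $K=(\TT\cdot\varphi)+I_2(\BB(\varphi))$ is the $(n-1)$-residual intersection $\TT\cdot\varphi:(x^r,y^s)$; once one checks $\height K=n-1$ (by the same localization step you use over $F$), Huneke--Ulrich \cite[Theorem~5.9]{HuUl88} gives that $\BB/K$ is Cohen--Macaulay, hence unmixed. Primeness then follows exactly as in your torsion-free argument: no associated prime of $K$ can contain $(x,y)$ (since $\height(K+(x,y))=n$ by hypothesis~(2)), so some $z\in\m$ is a nonzerodivisor on $\BB/K$, and $K_z=Q_z$ forces $K=Q$. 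Note that this route never invokes the Cohen--Macaulayness of $\RR[It]$ for $(2)\Rightarrow(1)$; your attempt to channel that hypothesis into step~(i) is a red herring. Your proposed alternative via Morey--Ulrich is closer in spirit, but the relevant input there is again the residual-intersection machinery rather than the fiber-height statement you cite.
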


\begin{proof} It suffices to show that (2) implies (1). We will prove this by showing
that $K = (\TT\cdot \varphi, I_2(\BB(\varphi))$ is a prime ideal. Since
$\height (K, (x,y)) = n$, $\height K \geq n-2$. Let $P$ be a minimal prime of
$K$ of height $n-2$. $(x,y) \not \subset P$. Let $z\in (x,y) \setminus P$. Then the localization
$P_z$ is a minimal
prime of $K_z= (I_{n-1}(\varphi)\cdot \TT)_z$. But this is the defining ideal
of $\RR_z[t]$, so it has height $n-1$.

This shows that $K$ has height $n-1$.
$K$ is a specialization of a generic residual intersection of a complete intersection so it is
Cohen-Macaulay (\cite[Theorem 5.9]{HuUl88}).

To prove $Q=K$ it suffices to show that $K$ is prime (recall that $Q$ is a prime of height $n-1$).
As above we can pick $z\in (x,y)$ but avoiding every associated prime of $K$. But as we saw, $K_z$
is a prime ideal of height $n-1$. This is enough to show that $K $ is prime.
\end{proof}

\subsection{Full set of quadratic equations}
We shall describe where the quadratic relations of the Rees algebras $\RR[It]$ are located. In general, from a presentation
\[ 0 \rar Q \lar \BB = \RR[\TT_1, \ldots, \TT_n] \lar \RR[It] \rar 0,\]
$\BB/(Q_1)$ defines the symmetric algebra $\Sym(I)$ of $I$. We put
\[ 0 \rar \mathcal{A} = A_2 + A_3 + \cdots \lar \Sym(I) \lar \RR[It] \rar 0.\]
Here $A_2$ represents the effective quadratic relations of the Rees algebra $\RR[It]$, and we represent it as
\[0 \rar \delta(I) \lar S_2(I) \lar I^2 \rar 0.\]
For a discussion of $\delta(I)$,  see \cite{SV}. One of its properties gives $\delta(I)$ in the exact sequence
\[ 0\rar \delta(I) \lar \H_1(I) \lar (\RR/I)^n \lar I/I^2 \rar 0,\]
where $\H_1(I)$ is the first Koszul homology module on a set of $n$ generators of $I$. This says that $\delta(I)$
are the homology classes of the syzygies of $I$ with coefficients in $I$.

\begin{Theorem} \label{Expeq2} Let $(\RR,\m)$ be a two-dimensional regular local ring and $I$ an $\m$-primary ideal. If the Rees
algebra $\RR[It]$ is Cohen-Macaulay, then
\[ Q = (Q_1, Q_2)= (Q_1):I=(\TT\cdot \varphi):I.\]
\end{Theorem}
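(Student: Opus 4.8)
The plan is to study the canonical surjection $\Sym(I)=\BB/(Q_1)\twoheadrightarrow\BB/Q=\RR[It]$, whose kernel is $\mathcal A:=Q/(Q_1)$, the obstruction to $I$ being of linear type. Its degree-$d$ component is $A_d=\ker(S_d(I)\to I^d)$, and the degree-two piece is $A_2=\delta(I)$ in the notation above. Since $Q_1=\TT\cdot\varphi$ by definition, the equality $(Q_1):I=(\TT\cdot\varphi):I$ is automatic, and the remaining assertions $Q=(Q_1,Q_2)$ and $Q=(Q_1):I$ translate into: (i) $\mathcal A$ is generated by $A_2$, and (ii) $I\cdot\mathcal A=0$. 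I will deduce both from the single reverse inclusion $Q\subseteq(Q_1,Q_2)$ together with the vanishing $I\cdot A_2=0$.

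First I would record the two easy inclusions. Because the structure map $\RR\to\RR[It]$ is injective, $Q\cap\RR=0$, so $I\not\subseteq Q$; as $Q$ is prime and $(Q_1)\subseteq Q$ this gives $(Q_1):I\subseteq Q:I=Q$. For the inclusion $(Q_1,Q_2)\subseteq(Q_1):I$ I would use the four-term sequence $0\to\delta(I)\to\H_1(I)\to(\RR/I)^n\to I/I^2\to0$ displayed above: it realizes $A_2=\delta(I)$ as a submodule of the first Koszul homology $\H_1(I)$ of a generating set of $I$. Since that set generates $I$, the ideal $I$ annihilates every higher Koszul homology, in particular $\H_1(I)$, hence $\delta(I)$; thus $I\cdot A_2=0$ in $\Sym(I)$, i.e. $I\cdot Q_2\subseteq(Q_1)$ and $Q_2\subseteq(Q_1):I$. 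This puts in place the chain $(Q_1,Q_2)\subseteq(Q_1):I\subseteq Q$, which collapses the instant one proves $Q\subseteq(Q_1,Q_2)$.

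The heart of the matter is therefore the reverse inclusion, equivalently the surjectivity of $S_{d-2}(I)\otimes A_2\to A_d$ for all $d\ge2$. Here the Cohen--Macaulay hypothesis enters through Proposition~\ref{ReesCMest}, which forces the reduction number $r(I)\le1$; choosing a minimal reduction $J=(a_1,a_2)$ one gets $I^2=JI$ and hence $I^d=J^{\,d-1}I=(a_1,a_2)^{d-1}I$ for every $d\ge1$. Thus $I^d$ is generated by the products $a_1^{\,i}a_2^{\,d-1-i}g_j$, and writing $u_1,u_2\in\BB_1$ for the preimages of $a_1t,a_2t$ the idea is that every monomial of $\BB_d$ can be rewritten, modulo $(Q_1,Q_2)$, as an $\RR$-combination of the $u_1^{\,i}u_2^{\,d-1-i}\TT_j$, which pins any degree-$d$ relation inside $(Q_1)_d+Q_2\,\BB_{d-2}$. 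This is the place where the length-one resolution $0\to\RR^{n-1}\xrightarrow{\varphi}\RR^n\to I\to0$ and the linearity of the entries of $\BB(\varphi)$ in the $\TT_j$ are used: together with Cohen--Macaulayness they bound by $1$ the regularity of $\RR[It]$ computed in the $\TT_j$, and an algebra of regularity one has its defining ideal generated in degree $\le2$ in the $\TT_j$.

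I expect the main obstacle to be exactly this promotion of the numerical condition $r(I)\le1$ to honest degree-two generation of $\mathcal A$: reduction number one guarantees that products fold back into $JI$, but one must verify that the rewriting introduces no genuinely new relation in degrees $\ge3$, i.e. that the approximation complex attached to $\varphi$ has no homology beyond the term already accounted for by $A_2$. Granting this, the inclusion $Q\subseteq(Q_1,Q_2)$ closes the chain $(Q_1,Q_2)\subseteq(Q_1):I\subseteq Q\subseteq(Q_1,Q_2)$, so all four ideals coincide; in particular $\mathcal A=A_2\cdot\Sym(I)$, whence $I\cdot\mathcal A=I\cdot A_2\cdot\Sym(I)=0$, and $Q=(Q_1,Q_2)=(Q_1):I=(\TT\cdot\varphi):I$ as claimed.
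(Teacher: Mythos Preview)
Your strategy coincides with the paper's: obtain $r(I)\le1$ from Cohen--Macaulayness via Proposition~\ref{ReesCMest}, derive $Q=(Q_1,Q_2)$ from it, and combine with $I\cdot Q_2\subseteq(Q_1)$ (which both you and the paper extract from $\delta(I)\hookrightarrow\H_1(I)$ together with $I\subseteq\ann\H_1(I)$) to collapse the chain $(Q_1,Q_2)\subseteq(Q_1):I\subseteq Q$. The one step you explicitly leave open --- promoting $r(I)\le1$ to generation of $Q$ in degrees $\le2$ --- is precisely what the paper dispatches by invoking \cite[Theorem~1.2]{Trung87}, which from the reduction-number bound yields both $Q=(Q_1,Q_2)$ and $\ann(\delta(I))\cdot Q_2\subseteq Q_1\BB_1$; your rewriting heuristic via $I^d=J^{d-1}I$ is in effect the mechanism inside Trung's proof, so the ``main obstacle'' you flag is a known theorem rather than a genuine gap. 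The side remarks on the linearity of $\BB(\varphi)$ and on regularity are not needed for this argument.
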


\begin{proof}
 Since  $\RR$ is Cohen--Macaulay, the reduction number $r(I)$ of $I$ satisfies $r(I) < \dim \RR=2$.
We now apply \cite[Theorem 1.2]{Trung87}: $\RR[It]$ is defined by linear and quadratic equations, $Q=(Q_1, Q_2)$
and \[ \ann(\delta(I))\cdot Q_2 \subset Q_1\BB_1.\]
Of course any nonzero ideal contained in $\ann(\delta(I))$ serves the purpose, in particular
$\ann(\H_1(I)) \supset I$ (actually there is equality).
This gives the assertion.
\end{proof}

Note that this does not require that $I_1(\varphi)$ be a complete intersection.

\begin{Example}{\rm  Let
 \[ I = (x, y)(x, y^3)(x,y^6) = (x^3, x^2y, xy^4, y^{10}).\]
$I$ is normal and
its matrix of syzygies is
\[
\varphi = \left[
\begin{array}{rrr}
y & 0 & 0 \\
-x & y^3 & 0 \\
0 & -x & y^6 \\
0 & 0  & -x \\
\end{array} \right].
\]

Note that $I_1(\varphi) = (x,y)$, but $I_2(\varphi) = (x^2, xy, y^4)\neq I_1(\varphi)^2$, so it does
not have the expected equations. We have
\[ I_2(\BB(\varphi)) = I_2\left(\left[\begin{array}{rrr}
-\TT_2 & -\TT_3 &  -\TT_4 \\
\TT_1 & y^2\TT_2 & y^5\TT_3 \\
\end{array} \right]\right),
\] which gives only two minimal generators for $Q_2$. An appeal to {\em Macaulay2} (\cite{Macaulay2}) gives the extra generator:
\begin{eqnarray*}
Q = (Q_1, Q_2) & = & \TT\cdot \varphi : I \\
& = & (\TT\cdot \varphi, I_2(\BB(\varphi)), \TT_2\TT_4-y^3\TT_3^2).
\end{eqnarray*}
}\end{Example}

An interesting question would ask about the arithmetical and homological properties of the Rees algebras of $\m$-full ideals.
Even for monomial ideals, these often fail to be Cohen-Macaulay, as the following example shows: $I = (x^{11}, x^8y, x^6y^2, x^5y^3, xy^4, y^{10})$, an $\m$-full ideal.
To show that $\RR[It]$  is {\em not} Cohen-Macaulay, by invoking {\em Macaulay2}, it  is enough
to verify that the special fiber $\mathcal{F}(I)$ of $I$ is {\em not} Cohen-Macaulay,
 according to the following criterion inspired
by
\cite[Corollary 2.11]{CGPU03}:

\begin{Theorem} \label{CMviafiber} Let $I$ be an $\m$-primary $\m$-full ideal. If the special fiber
 $F=\mathcal{F}(I)$ is Cohen-Macaulay then $\RR[It]$ is also Cohen-Macaulay.
 \end{Theorem}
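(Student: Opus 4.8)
The plan is to reduce everything to the reduction number via the Cohen--Macaulay Test (Proposition~\ref{ReesCMest}): since $(\RR,\m)$ is a two--dimensional Cohen--Macaulay local ring with infinite residue field, $\RR[It]$ is Cohen--Macaulay if and only if $r(I)\le 1$. So it suffices to produce a minimal reduction $J$ of $I$ with $I^2=JI$. The engine is a comparison between the Hilbert function of the special fiber $F=\mathcal{F}(I)=\bigoplus_{m\ge 0} I^m/\m I^m$ and the reduction number, exploiting that $\m$--fullness pins down the slope of that Hilbert function exactly.

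First I would record the identity, valid for any minimal reduction $J=(a,b)$ with $a^\ast,b^\ast\in F_1$ a homogeneous system of parameters,
\[
\bigl(F/(a^\ast,b^\ast)F\bigr)_m \;=\; I^m/(\m I^m+JI^{m-1}),
\]
so that by Nakayama $\bigl(F/(a^\ast,b^\ast)F\bigr)_m=0$ exactly when $I^m=JI^{m-1}$; hence the top nonvanishing degree of $F/(a^\ast,b^\ast)F$ equals $r_J(I)$. Because $k$ is infinite and $I$ is $\m$--primary (so $\dim F=\ell(I)=2$), a general pair of linear forms $a^\ast,b^\ast$ is a system of parameters of $F$, and the hypothesis that $F$ is Cohen--Macaulay makes them a regular sequence. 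Thus the $h$--vector $h_j:=\dim_k\bigl(F/(a^\ast,b^\ast)F\bigr)_j$ is nonnegative, the Hilbert series of $F$ equals $\bigl(\sum_j h_jz^j\bigr)/(1-z)^2$, and $r_J(I)=\max\{j:h_j\neq 0\}$.

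Now I would use $\m$--fullness to squeeze this $h$--vector into degrees $\le 1$. Writing $n=\mu(I)$, one has $h_0=1$ and $h_1=\dim_k F_1-2=n-2$, so the multiplicity of $F$ satisfies $e(F)=\sum_{j\ge 0}h_j\ge h_0+h_1=n-1$. On the other hand, order is additive on powers, $\mathrm{ord}(I^m)=m\,\mathrm{ord}(I)$, and every $\m$--primary ideal obeys $\mu(\,\cdot\,)\le\mathrm{ord}(\,\cdot\,)+1$ (Abhyankar's bound; in the monomial case this is immediate from the lexicographic description, since the $i$--th generator has degree $a_i+b_{n-i+1}\ge n-1$). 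Since $I$ is $\m$--full, $\mathrm{ord}(I)=\mu(I)-1=n-1$ by Theorem~\ref{p-w-v}, whence
\[
H_F(m)=\mu(I^m)\;\le\;\mathrm{ord}(I^m)+1\;=\;(n-1)m+1 .
\]
Comparing leading terms, the Hilbert polynomial of $F$ has degree $1$ and leading coefficient $e(F)\le n-1$; together with $e(F)\ge n-1$ this forces $e(F)=n-1$ and hence $\sum_{j\ge 2}h_j=0$. By nonnegativity $h_j=0$ for all $j\ge 2$, so $r_J(I)\le 1$, and therefore $r(I)\le 1$ and $\RR[It]$ is Cohen--Macaulay.

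The one delicate point, and the place where the Cohen--Macaulay hypothesis on $F$ is indispensable, is the passage ``$e(F)=\sum_j h_j$ with $h_j\ge 0$'': without it the numerator of the Hilbert series need not have nonnegative coefficients and the squeeze collapses. Everything else is a matter of assembling standard facts (additivity of order, the generator bound $\mu\le\mathrm{ord}+1$, and the translation of $r_J(I)$ into the top degree of $F/(a^\ast,b^\ast)F$), so I expect the main work to be verifying that a general linear system of parameters of $F$ is a regular sequence whose lifts give a genuine minimal reduction of $I$, and that the inequality $\mu(I^m)\le(n-1)m+1$ holds with exactly this slope, which is where $\m$--fullness enters decisively.
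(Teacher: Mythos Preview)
Your argument is correct and follows essentially the same route as the paper: compute (or bound) the Hilbert function of $F$ using $\mathrm{ord}(I)=n-1$, use Cohen--Macaulayness of $F$ to control the $h$-vector (equivalently, the degrees of a free basis over a Noether normalization), and conclude $r(I)\le 1$. The only notable difference is that the paper asserts the \emph{equality} $\mu(I^j)=j(n-1)+1$ directly---using that powers of an $\m$-full ideal are again $\m$-full---and reads off the Hilbert series $\frac{1+(n-2)\ttt}{(1-\ttt)^2}$, whereas you use only the upper bound $\mu(I^m)\le (n-1)m+1$ and recover equality by squeezing $e(F)$ between $h_0+h_1=n-1$ and the leading coefficient; your variant thus avoids invoking the stability of $\m$-fullness under products, at the cost of an extra line of Hilbert-series bookkeeping.
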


 \begin{proof}
 Suppose $\mu(I)=n$ and let us determine the Hilbert function of $F$.
For every $j\geq 0$, \[ \mu(F_j) = j(n-1) + 1 ,\] since
$I^j$ is $\m$-full and contains an element of  order $j(n-1)$. It follows that the Hilbert series of $F$ is
\[ H_F(\ttt) = {\frac{1+ (n-2)\ttt }{(1-\ttt)^2}}.\]
This says that if $F$ is Cohen-Macaulay, as a  module over a Noether normalization $\AA=k[u,v]$, it is  $\AA$-free, with 1 generator of degree $0$ and $n-2$  generators of degrees
 \[ 1\leq d_1\leq d_2 \leq \cdots \leq d_{n-2}.\]
The Hilbert function forces $d_1 = \cdots = d_{n-2}= 1$.
 Therefore $I$ has reduction number
at most one.
\end{proof}

The same assertion holds for two-dimensional regular local rings of infinite residue field.


\begin{thebibliography}{99}

\bibitem{ainv} J. P. Brennan, L. A. Dupont and R. H. Villarreal,
Duality, a-invariants and canonical modules
of rings arising from linear optimization problems,
{\em Bull. Math. Soc. Sci. Math. Roumanie} (N.S.) {\bf 51}  (2008),
no. 4, 279--305.

\bibitem{BG-book} W. Bruns and J. Gubeladze, {\it Polytopes,
rings, and $K$-theory}, Springer Monographs in Mathematics. Springer,
Dordrecht, 2009.

\bibitem{Conca05}{A. Conca, E. De Negri, A. V. Jayanthan and M. E. Rossi, Graded rings associated with
contracted ideals, {\em J. Algebra} {\bf 284} (2005), 593--626.}

\bibitem{Conca07}{A. Conca, E. De Negri, and M. E. Rossi,
Contracted ideals and the Gr\"{o}bner fan of the rational normal curve,
 {\em  Algebra \& Number Theory} {\bf 1} (2007), 239--268.}

\bibitem{CGPU03}{A. Corso, L. Ghezzi, C. Polini and B. Ulrich, Cohen--Macaulayness of special
fiber rings, {\em Comm. in Algebra} {\bf 31}  (2003), 3713--3734.   }

\bibitem{crispin-quinonez-thesis} V. Crispin Qui\~nonez, Integral
closure and related operations on monomial ideals,
Ph D thesis, Stockholm University, 2006.

\bibitem{crispin-quinonez} V. Crispin Qui\~nonez,
Integral closure and other operations on monomial ideals,
J. Commut. Algebra {\bf 2} (2010), no. 3, 359--386.

\bibitem{multical} D. Delfino, A. Taylor,
W. V. Vasconcelos, R. H. Villarreal and N. Weininger,
Monomial ideals and the computation of multiplicities,
{\it Commutative ring theory and applications\/} (Fez, 2001), pp.
87--106, Lecture
Notes in Pure and Appl. Math. {\bf 231}, Dekker, New York, 2003.

\bibitem{poset} L. A. Dupont and R. H. Villarreal,
Edge ideals of clique clutters of comparability graphs
and the normality of monomial ideals,
{\em Math. Scand.} {\bf 106} (2010), no. 1, 88--98.

\bibitem{normali} C. Escobar, R. H. Villarreal and Y. Yoshino, Torsion
freeness and normality of blowup rings of monomial ideals,
{\it Commutative Algebra\/}, Lect. Notes Pure Appl. Math.
{\bf 244}, Chapman \& Hall/CRC, Boca Raton, FL, 2006, pp. 69-84.

\bibitem{clutters}{I. Gitler, C. Valencia and R. H. Villarreal,
A note on Rees algebras and the MFMC property, {\em Beitr\"age Algebra
Geom.} {\bf 48} (2007), no. 1, 141--150.}

\bibitem{GS82} {S. Goto and Y. Shimoda,  Rees algebras of Cohen--Macaulay
   local rings, in {\em Commutative Algebra}, Lect. Notes in Pure and Applied Math. {\bf 68}, Marcel Dekker,
     New York, 1982, 201--231.}

\bibitem{Macaulay2}{D. Grayson and M. Stillman,
  {\em Macaulay 2}, a software system for research in algebraic
  geometry, 2006.
  Available at {http://www.math.uiuc.edu/Macaulay2/}.}

\bibitem{Hu}{C. Huneke, Complete ideals in two-dimensional regular local rings. In
{\em Commutative Algebra} (Berkeley, CA, 1987) Math. Sci. Res. Inst.
Publ., 15, New York, Springer, 1989, pp. 325–-338.}

\bibitem{HuSw}{C. Huneke and I. Swanson, {\em Integral Closure of
Ideals, Rings, and Modules},
  London Math. Soc., Lecture Note Series {\bf 336}, Cambridge
  University Press, 2006.}

\bibitem{HuUl88}{C. Huneke and B. Ulrich, Residual intersections, {\em J. reine  angew. Math.} {\bf 390} (1988),
1--20.}

\bibitem{Mon}{M. Lejeune-Jalabert, Linear systems with infinitely near base conditions
and complete ideals in dimension two. In {\em Singularity Theory},
(Trieste, 1991), River Edge, NJ, World Sci. Publishing, 1995, pp.
345–-369.}

\bibitem{L}{J. Lipman, On complete ideals in regular local rings. In {\em Algebraic
Geometry and Commutative Algebra}, Vol. I, Tokyo, Kinokuniya, 1988,
pp. 203–-231.}

\bibitem{LT}{J. Lipman and B. Teissier, Pseudo-rational local rings and a theorem of Brian\c{c}on-Skoda about
 integral closures of ideals,
{\em Michigan Math. J.} {\bf 28} (1981),
97--112.}

\bibitem{Rees}{D. Rees, Hilbert functions and pseudorational local rings of dimension
two, {\em J. London Math. Soc.} {\bf 24} (1981), 467-–479.}

\bibitem{MU}{S. Morey and B. Ulrich,  Rees algebras of ideals with low codimension,
 {\em Proc. Amer. Math. Soc.} {\bf  124 } (1996), 3653--3661.}

\bibitem{Schr}{A. Schrijver, {\it Theory of Linear and
Integer Programming\/}, John Wiley \& Sons, New York, 1986.}

\bibitem{Schr2} {A. Schrijver,{\it Combinatorial Optimization\/},
Algorithms and Combinatorics {\bf 24}, Springer-Verlag, Berlin, 2003.}

\bibitem{SV}{A. Simis and W. V. Vasconcelos, The syzygies of the conormal module, {\em American J. Math.}
{\bf 103} (1981), 203-224.}

\bibitem{Trung87}{N. V. Trung, Reduction exponent and degree bound for the defining equations
of graded rings,
{\em Proc. Amer. Math. Soc. } {\bf 101} (1987), 229--236.}

\bibitem{icbook}{W. V. Vasconcelos, {\em Integral Closure}, Springer Monographs in Mathematics, New York, 2005. }

\bibitem{JWat}{J. Watanabe, $\m$-full ideals, {\em Nagoya Math. J.} {\bf 106} (1987), 101--111.}

\bibitem{Z}{O. Zariski, Polynomial ideals defined by infinitely near base points,
{\em Amer. J. Math.} {\bf 60} (1938), 151–-204.}

\bibitem{ZS}{O. Zariski and P. Samuel, {\em Commutative Algebra. II} Reprint of the
1960 edition. Graduate Texts in Mathematics, 29. New York–Heidelberg,
Springer–Verlag, 1975.}

\end{thebibliography}
\end{document}